 \numberwithin{equation}{section}
\theoremstyle{nonumberplain}  
\newtheorem{proof}{Proof} 
\theoremstyle{plain}  
\newtheorem{proposition}{Proposition}[section]  
\newtheorem{corollary}[proposition]{Corollary}  
\newtheorem{lemma}[proposition]{Lemma}  
\newtheorem{theorem}[proposition]{Theorem}   
\newtheorem{remark}[proposition]{Remark}
\newtheorem{example}[proposition]{Example}  
\newtheorem{definition}[proposition]{Definition} 
\theoremstyle{nonumberplain}
\newcommand{\R}{\mathbb{R}}
\newcommand{\e}{\mathrm{e}}
\newcommand{\V}{\mathcal{V}}
\newcommand{\N}{\mathbb{N}}
\newcommand{\C}{\mathbb{C}}
\newcommand{\dd}{\mathrm{d}}
\newcommand{\id}{\mathrm{id}}
\newcommand{\<}{\left\langle}
\renewcommand{\>}{\right\rangle}
\title{Strong Short Time Asymptotics and Convolution Approximation of the Heat Kernel}
\author{ Matthias Ludewig}
\begin{document}

\maketitle
 
\begin{center}
  Max-Planck Institute for Mathematics\\
 Vivatgasse 7 / 53119 Bonn \\ \medskip
 matthias$\_$ludewig@gmx.de
\end{center}

\begin{abstract}
We give a short proof of a strong version of the short time asymptotic expansion of heat kernels associated to Laplace type operators acting on sections of vector bundles over compact Riemannian manifolds, including exponential decay of the difference of the approximate heat kernel and the true heat kernel. We use this to show that repeated convolution of the approximate heat kernels can be used to approximate the heat kernel on all of $M$, which is related to expressing the heat kernel as a path integral. This scheme is then applied to obtain a short-time asymptotic expansion of the heat kernel at the cut locus.
\end{abstract}


\section{Introduction and Main Results}

Let $M$ be a compact Riemannian manifold of dimension $n$ and let $L$ be a Laplace type operator, acting on sections of a vector bundle $\V$ over $M$. For $t>0$, the heat kernel $p_t^L$ of $L$ is a smooth section of the bundle $\V \boxtimes \V^*$ over $M \times M$ (the vector bundle with fiber $\mathrm{Hom}(V_y, V_x)$ over the point $(x, y) \in M\times M$). It is well-known that for $x, y \in M$ close, the heat kernel has an asymptotic expansion of the form
\begin{equation} \label{HeatKernelExpansion}
  p_t^L(x, y) ~\sim~ \e_t(x, y) \sum_{j=0}^\infty t^j \frac{\Phi_j(x, y)}{j!},
\end{equation}
where
\begin{equation} \label{EuclideanHeatKernel}
  \e_t(x, y) = \frac{e^{-d(x, y)^2 / 4t}}{(4 \pi t)^{n/2}}
\end{equation}
is the Euclidean heat kernel (the name comes from the fact that $\e_t(x, y)$ is the heat kernel in case that $M = \R^n$ and $L=\Delta$, the usual Laplace operator). In \eqref{HeatKernelExpansion}, the ``correction terms'' $\Phi_j(x, y)$ are certain smooth sections of the bundle $\V \boxtimes \V^*$ over $M \bowtie M$, where $M \bowtie M = M \times M \setminus \{\text{cut points}\}$ is the set of points $(x, y) \in M \times M$ such that there is a unique minimizing geodesic connecting $x$ and $y$ (compare e.g.\ \cite[Section~2.5]{bgv}). In this paper, we will prove that the asymptotic relation \eqref{HeatKernelExpansion} can be made precise as follows.

\begin{theorem}[Strong Heat Kernel Asymptotics] \label{ThmStrongEstimates}
Let $L$ be a Laplace type operator, acting on sections of a vector bundle $\V$ over a compact Riemannian manifold $M$. Then for any compact subset $K$ of $M \bowtie M$, any $T>0$ and any numbers $\nu, k, l, m \in \N_0$, there exists a constant $C>0$ such that 
\begin{equation} \label{StrongAsymptoticsDifferential}
  \left| \frac{\partial^k}{\partial t^k}\nabla_x^l \nabla_y^m\left\{\frac{p_t^L(x, y)}{\e_t(x, y)} - \sum_{j=0}^\nu t^j \frac{\Phi_j(x, y)}{j!}\right\}\right| \leq C t^{\nu + 1-k}
\end{equation}
for all $(x, y) \in K$, whenever $0 < t \leq T$. Here $\Phi_j(x, y)$ are certain smooth sections of the bundle $\V\boxtimes \V^*$ over $M \bowtie M$.
\end{theorem}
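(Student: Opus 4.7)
The plan is to run the classical parametrix construction combined with Duhamel's principle, paying careful attention to derivative rates in $t$. First I would construct the coefficients $\Phi_j$: in normal coordinates around a point $y \in M$, substituting the ansatz $\e_t(x,y)\sum_{j\geq 0} t^j \Phi_j(x,y)/j!$ into the heat equation $(\partial_t + L_x)u = 0$ and matching order by order in $t$ yields a recursive hierarchy of first-order linear ODEs along radial geodesics emanating from $y$, with $\Phi_0$ given by parallel transport times a Jacobian factor. Because the exponential map at $y$ is a diffeomorphism on the set of $x$ with $(x,y)\in M\bowtie M$, these transport equations define each $\Phi_j$ uniquely as a smooth section of $\V\boxtimes\V^*$ over all of $M\bowtie M$.

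Next, pick a smooth cutoff $\chi$ on $M\times M$ equal to $1$ on an open neighborhood of $K$ and supported in $M\bowtie M$, and set $K_t^\nu(x,y):=\chi(x,y)\e_t(x,y)\sum_{j=0}^\nu t^j \Phi_j(x,y)/j!$. By the transport equations, the residual splits as
\[ Q_s^\nu(x,y) := (\partial_s + L_x)K_s^\nu(x,y) = \chi(x,y)\frac{s^\nu}{\nu!}\e_s(x,y)L_x\Phi_\nu(x,y) + S_s(x,y), \]
where $S_s$ is supported in $\{\nabla\chi\neq 0\}$, hence bounded away from the diagonal by some $\delta>0$; the $\e_s$ factor forces $S_s$ and all its derivatives to be $O(e^{-c/s})$ uniformly. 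Duhamel's principle then gives
\[ r_t^\nu(x,y) := p_t^L(x,y) - K_t^\nu(x,y) = -\int_0^t\!\int_M p_{t-s}^L(x,z)\, Q_s^\nu(z,y)\,\dd z\,\dd s. \]

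For the undifferentiated ($k=l=m=0$) case I would bound $\int_M p_{t-s}^L(x,z)\e_s(z,y)\,\dd z \leq C\e_t(x,y)$ using standard Gaussian upper bounds for $p^L$ together with the semigroup identity; integrating the $s^\nu$ prefactor over $s\in[0,t]$ produces $|r_t^\nu(x,y)|\leq Ct^{\nu+1}\e_t(x,y)$, while the $S_s$ contribution is exponentially small in $t$ and absorbed. To introduce $\nabla_y^m$, one differentiates under the integral: the $y$-derivatives act only on $Q_s^\nu(z,y)$, whose structure is preserved since $\Phi_j$ and $\chi$ are smooth on $M\bowtie M$. For $\nabla_x^l$, split $\int_0^t=\int_0^{t/2}+\int_{t/2}^t$ and combine with the pointwise Gaussian bound $|\nabla_x^l p_\tau^L(x,z)|\leq C\tau^{-l/2}\e_{C\tau}(x,z)$: on the first half $\tau=t-s\geq t/2$ controls $\tau^{-l/2}$, while on the second half $s\geq t/2$ supplies compensating powers of $t$ through $s^\nu$. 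Finally, time derivatives are handled inductively via $\partial_t r_t^\nu = -L_x r_t^\nu - Q_t^\nu$: each $\partial_t$ either differentiates $Q_t^\nu$ (reducing $s^\nu$ to $s^{\nu-1}$ and costing one factor of $t$) or turns into $L_x$ acting on $r_t^\nu$, already controlled by the spatial estimates; this yields the stated $t^{\nu+1-k}$ rate. The principal obstacle is the uniform control of $\nabla_x^l p_{t-s}^L$ as $s\to t$; the cleanest workaround is to use the semigroup factorization $p_{t-s}^L = p_{(t-s)/2}^L\circ p_{(t-s)/2}^L$ and replace pointwise derivative estimates on $p^L$ by Sobolev embedding combined with $L^2$ elliptic bounds for powers of $L$.
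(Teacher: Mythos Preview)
Your approach is the classical parametrix--Duhamel route and is genuinely different from the paper's proof. The paper does not touch Duhamel at all: it passes through the transmutation formula $p_t^L(x,y)=\int\gamma_t(s)G^\prime(s,x,y)\,\dd s$ relating the heat kernel to the wave propagator, and then feeds in the Hadamard expansion of $G(s,x,y)$ in Riesz distributions. The key computational step is Lemma~\ref{LemmaRieszIntegral}, which evaluates $\frac{1}{2t}\int\gamma_t(s)R(2+2j;s,x,y)\,s\,\dd s$ exactly as $\e_t(x,y)\,t^j/j!$; because the wave remainder $\delta^\nu(s,x,y)$ vanishes identically for $|s|<d(x,y)$ (finite propagation speed), the substitution $s=\sqrt{u^2+d(x,y)^2}$ pulls the factor $\e_t(x,y)$ out of the error integral with no loss, and the spatial and time derivatives are then controlled simply by the $C^k$ regularity of $\delta^\nu$ for $\nu$ large. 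In particular, no Gaussian heat kernel bounds are used as input---they are instead derived afterwards as corollaries.

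Your route, by contrast, rests on two steps that are more delicate than your sketch suggests. First, the claimed convolution bound $\int_M p_{t-s}^L(x,z)\,\e_s(z,y)\,\dd z\leq C\,\e_t(x,y)$ uniformly in $s\in(0,t)$ with the \emph{sharp} exponent $d(x,y)^2/4t$ is essentially equivalent to a sharp Gaussian upper bound on $p^L$ (or two-sided bounds on $p^\Delta$); this is precisely the point where the paper's footnote warns that Chavel's argument goes wrong via an incorrectly cited convolution lemma. You can import such bounds from the literature, but then the proof is no longer self-contained and for general (non-self-adjoint) $L$ the domination $|p^L|\leq C p^\Delta$ is not automatic. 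Second, the derivative bound $|\nabla_x^l p_\tau^L(x,z)|\leq C\tau^{-l/2}\e_{C\tau}(x,z)$ with $C>1$ in the exponent does not suffice: convolving $\e_{C(t-s)}$ against $\e_s$ gives decay $e^{-d(x,y)^2/4(C(t-s)+s)}$, and for $(x,y)$ bounded away from the diagonal the ratio to $\e_t(x,y)$ blows up as $t\to 0$. Your proposed Sobolev/semigroup workaround yields $L^2$ control, not the required pointwise weighted bound. These gaps can likely be closed (e.g.\ by parabolic interior estimates applied to $r_t^\nu/\e_t$, or by invoking derivative Gaussian bounds with the sharp exponential constant), but as written the argument is incomplete; the transmutation approach bypasses all of this by trading Gaussian off-diagonal decay for the hard support property of the wave kernel.
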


In the theorem, $\nabla_x$ and $\nabla_y$ denote the covariant derivative with respect to the $x$ (respectively $y$) variable, where we use any metric connection on the bundle $\V$ (changing the connection only alters the constant $C$ on the right hand side).

\begin{corollary}
We have the complete asymptotic expansion
\begin{equation*}
  \frac{p_t^L(x, y)}{\e_t(x, y)} ~\sim~ \sum_{j=0}^\infty t^j \frac{\Phi_j(x, y)}{j!}
\end{equation*}
in the sense of topological vector spaces, in the Fréchet topology of $C^\infty(M \bowtie M , \V \boxtimes \V)$.
\end{corollary}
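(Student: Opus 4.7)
The plan is to observe that the corollary is essentially a reformulation of Theorem \ref{ThmStrongEstimates} in the language of the Fr\'echet topology. First, I would recall that the natural Fr\'echet topology on $C^\infty(M \bowtie M, \V \boxtimes \V^*)$ is generated by the family of seminorms
\begin{equation*}
  \|f\|_{K, l, m} := \sup_{(x,y) \in K} \bigl|\nabla_x^l \nabla_y^m f(x,y)\bigr|,
\end{equation*}
indexed by compact subsets $K \subset M \bowtie M$ and by non-negative integers $l, m$. By the standard definition, a formal series $\sum_j t^j g_j$ is the asymptotic expansion of a one-parameter family $f_t$ in this Fr\'echet topology precisely when, for every such seminorm and every $\nu \in \N_0$, the partial-sum remainder satisfies $\|f_t - \sum_{j=0}^\nu t^j g_j\|_{K, l, m} = O(t^{\nu+1})$ as $t \to 0^+$.

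The next step is simply to read off the required bound from Theorem \ref{ThmStrongEstimates}. Setting $k=0$ in the estimate \eqref{StrongAsymptoticsDifferential} gives, for any compact $K \subset M \bowtie M$ and any triple $\nu, l, m \in \N_0$,
\begin{equation*}
  \sup_{(x,y) \in K}\Bigl|\nabla_x^l \nabla_y^m\Bigl\{\frac{p_t^L(x,y)}{\e_t(x,y)} - \sum_{j=0}^\nu t^j \frac{\Phi_j(x,y)}{j!}\Bigr\}\Bigr| \leq C t^{\nu+1},
\end{equation*}
valid for all $0 < t \leq T$. This is exactly the required estimate on the seminorm $\|\cdot\|_{K,l,m}$ of the remainder, and since $\nu$, $K$, $l$, and $m$ are arbitrary, the claimed asymptotic expansion holds in the Fr\'echet topology.

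Because the corollary reduces to a direct translation of the already-established strong pointwise estimate, there is no real obstacle to overcome; the content sits entirely in Theorem \ref{ThmStrongEstimates}. The only point worth flagging is that the definition of the above seminorms involves a choice of metric connection on $\V$, but the parenthetical remark following Theorem \ref{ThmStrongEstimates} shows that different choices of connection yield equivalent families of seminorms (only the constants $C$ change, not the order $t^{\nu+1}$ of the remainder), so the statement of the corollary is independent of this choice.
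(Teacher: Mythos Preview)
Your proposal is correct and matches the paper's approach: the corollary is stated in the paper without proof, as an immediate consequence of Theorem~\ref{ThmStrongEstimates}, and your argument is precisely the straightforward translation of the $k=0$ case of \eqref{StrongAsymptoticsDifferential} into the seminorm language of the Fr\'echet topology on $C^\infty(M \bowtie M, \V \boxtimes \V^*)$.
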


\medskip

Usually, the asymptotic relation \eqref{HeatKernelExpansion} is interpreted to say that for any $\nu \in \N_0$, 
\begin{equation} \label{WeakAsymptotics}
  \left| p_t^L(x, y) - \e_t(x, y) \sum_{j=0}^\nu t^j \frac{\Phi_j(x, y)}{j!} \right| \leq C t^{\nu +1}
\end{equation}
uniformly for $(x, y)$ over compact subsets of $M \bowtie M$. This statement is much weaker than Thm.~\ref{ThmStrongEstimates} (even in the case that $k=l=m=0$), since the latter implies that the right hand side of \eqref{WeakAsymptotics} can be replaced by $C t^{\nu+1} \e_t(x, y)$, which decays exponentially when $d(x, y) >0$. Proofs for the weaker statement can be found in various places in the literature (see \cite[Thm.~2.30]{bgv}, \cite[Thm.~7.15]{roe98}, \cite[3.2]{Rosenberg}, \cite[III.E]{BergerGauduchonMazet} just to name a few\footnote{Moreover, Chavel \cite[p.~154]{ChavelEigenvalues} claims to prove a version of the strong statement, but his proof is based on the wrong Lemma~1 on p.~152, which is incorrectly cited from \cite{BergerGauduchonMazet}.}).
The stronger result of Thm.~\ref{ThmStrongEstimates} seems to be somewhat folklore, but to the author's knowledge, no easily accessible proof exists in the literature outside either the theory of pseudo-differential operators, where one usually proves more general statements using a somewhat huge machinery (see e.g.\ \cite{GreinerHeatEquation} or \cite{MelroseAPS}), or the realm of stochastic analysis (e.g.~\cite{benarous}, \cite{azencott} or \cite{molchanov}).

The first goal of this paper is to give an easy proof of Thm.~\ref{ThmStrongEstimates} using the so-called {\em transmutation formula}, which relates the heat equation to the wave equation, and the Hadamard expansion of the wave kernel. This approach goes back to an older paper of Kannai \cite{Kannai}, who proves a variant of Thm.~\ref{ThmStrongEstimates} in the scalar case (compare also \cite{TaylorTwo}).

Thm.~\ref{ThmStrongEstimates} can be generalized to general complete manifolds. However, this is a somewhat intricate matter, as general Laplace Type operators need not have closed extensions generating operator semigroups. For formally self-adjoint Laplace type operators $L$, we prove that they have at most one such self-adjoint extension and that if they do, a version of Thm.~\ref{ThmStrongEstimates} holds for the corresponding heat kernel.

\medskip

The asymptotic expansion \eqref{HeatKernelExpansion} motivates to define {\em approximate heat kernels} $\e_t^\nu(x, y)$ by
\begin{equation} \label{ApproximateHeatKernel}
  \e_t^\nu(x, y) := \chi\bigl(d(x, y)\bigr) \,\e_t(x, y) \sum_{j=0}^\nu t^j \frac{\Phi_j(x, y)}{j!},
\end{equation}
where $\chi: [0, \infty) \longrightarrow [0, 1]$ is a smooth function  with $\chi(r) = 1$ near zero and support contained in $[0, \mathrm{inj}(M))$ (with $\mathrm{inj}(M)$ denoting the injectivity radius of $M$). If for general smooth kernels $k, \ell \in C^\infty(M \times M, \V \boxtimes \V^*)$, we define their convolution $k * \ell$ by
\begin{equation*}
  (k * \ell)(x, y) := \int_M k(x, z) \ell(z, y) \dd z, 
\end{equation*}
it turns out that the heat kernel $p_t^L(x, y)$ can be approximated by repeated convolutions of the kernel $\e_t^\nu(x, y)$. More precisely, we have the following result.

\begin{theorem}[Approximation by  Convolution] \label{ThmConvolutionApproximation}
~~Let $L$ be a formally self-adjoint Laplace type operator, acting on sections of a metric vector bundle $\V$ over a compact Riemannian manifold $M$. Then for any $\delta > 0$ with
\begin{equation}
  \delta < \left(\frac{\mathrm{inj}(M)}{\mathrm{diam}(M)}\right)^2,
\end{equation}
any $\nu \in \N_0$ and each $T>0$, there exists a constant $C>0$ such that 
\begin{equation}
  \left|p_t^L(x, y) - \bigl(\e_{\Delta_1\tau}^\nu * \cdots * \e_{\Delta_N \tau}^\nu\bigr)(x, y)\right| \leq C\,  p_t^\Delta(x, y)\,|\tau|^\nu \,t
 \end{equation}
for all $x, y \in M$ and for any partition $\tau = \{0 = \tau_0 < \tau_1 < \dots < \tau_N = t \leq T\}$ of an interval $[0, t]$  with $|\tau| \leq \delta t$, where $p_t^\Delta$ is the heat kernel of the Laplace-Beltrami operator on $M$. Here we used the notation $\Delta_j\tau := \tau_j - \tau_{j-1}$ and $|\tau| := \max_{1 \leq j \leq N} \Delta_j \tau$ for the increment, respectively the mesh of a partition $\tau$.
\end{theorem}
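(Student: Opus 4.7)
The plan is to combine the semigroup property of the heat kernel with the strong asymptotics of Theorem~\ref{ThmStrongEstimates} via a telescoping argument, and then control convolutions of Gaussian-type kernels on $M$ by the scalar Laplace--Beltrami heat kernel $p_t^\Delta$. Since $L$ is formally self-adjoint on a compact manifold, it is essentially self-adjoint and its heat semigroup obeys $p_t^L = p_{\Delta_1\tau}^L * \cdots * p_{\Delta_N\tau}^L$. Subtracting the corresponding convolution of approximate kernels yields the telescoping identity
\begin{equation*}
  p_t^L - \e_{\Delta_1\tau}^\nu * \cdots * \e_{\Delta_N\tau}^\nu = \sum_{j=1}^N \e_{\Delta_1\tau}^\nu * \cdots * \e_{\Delta_{j-1}\tau}^\nu * \bigl(p_{\Delta_j\tau}^L - \e_{\Delta_j\tau}^\nu\bigr) * p_{\Delta_{j+1}\tau}^L * \cdots * p_{\Delta_N\tau}^L,
\end{equation*}
so it suffices to bound each summand separately and then sum.

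For the single-step error $|p_s^L(x,y) - \e_s^\nu(x,y)|$, I would apply Theorem~\ref{ThmStrongEstimates} on the region $\{d(x,y) < \mathrm{inj}(M)\}$ where the cutoff satisfies $\chi \equiv 1$; this gives a pointwise bound of the form $C\, s^{\nu+1}\,\e_s(x,y)$. Outside the support of $\chi$ one invokes standard off-diagonal Gaussian upper bounds for $p_s^L$ on compact manifolds. The quantitative condition $\delta < \bigl(\mathrm{inj}(M)/\mathrm{diam}(M)\bigr)^2$ is tuned precisely so that $s = \Delta_j\tau \leq \delta t$ forces $\mathrm{inj}(M)^2/(4s) > \mathrm{diam}(M)^2/(4t)$; hence any contribution from distances $d(\cdot,\cdot) \geq \mathrm{inj}(M)$ carries a Gaussian factor smaller than $\e^{-d(x,y)^2/4t}$ and can be absorbed into $p_t^\Delta(x,y)$.

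The technical heart of the proof is a convolution estimate of the form
\begin{equation*}
  \bigl(\e_{s_1}^\nu * \cdots * \e_{s_{j-1}}^\nu * q_{s_j} * p_{s_{j+1}}^L * \cdots * p_{s_N}^L\bigr)(x,y) \;\leq\; C\, s_j^{\nu+1}\, p_t^\Delta(x,y),
\end{equation*}
uniform in the partition, where $q_{s_j}$ denotes the Gaussian majorant coming from the previous step. Each individual convolution $\e_s * \e_{s'}$ on $M$ concentrates by Laplace's method around the geodesic midpoint and reproduces $\e_{s+s'}(x,y)$ to leading order; iteratively and using the two-sided Gaussian bounds available for $p_t^\Delta$ on a compact manifold, one can control the full chain. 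Inserted into the telescoping sum this gives
\begin{equation*}
  \bigl|p_t^L - \e_{\Delta_1\tau}^\nu * \cdots * \e_{\Delta_N\tau}^\nu\bigr|(x,y) \;\leq\; C\, p_t^\Delta(x,y)\, \sum_{j=1}^N (\Delta_j\tau)^{\nu+1} \;\leq\; C\, p_t^\Delta(x,y)\, |\tau|^\nu\, t,
\end{equation*}
which is the claimed estimate.

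The main obstacle I expect is this last convolution step: the triangle inequality gives only $d(x,y) \leq d(x,z) + d(z,y)$, which falls short of the Euclidean Pythagoras-type identity that would make $\e_s * \e_{s'} \equiv \e_{s+s'}$, and curvature contributes correction terms that must be controlled uniformly in the number $N$ of convolutions. The role of the hypothesis $|\tau|\leq \delta t$ with $\delta<(\mathrm{inj}(M)/\mathrm{diam}(M))^2$ will be precisely to ensure that every intermediate Gaussian is sharp enough on the scale of the injectivity radius for the error from the truncation $\chi$ and from curvature to be dominated by the final Gaussian scale $d(x,y)^2/4t$.
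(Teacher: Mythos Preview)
Your overall architecture---telescoping via the semigroup property, bounding the single-step error $|p_s^L-\e_s^\nu|$ by $Cs^{\nu+1}\e_s$ on the set where $\chi\equiv 1$ plus an off-diagonal remainder, and then summing $\sum_j(\Delta_j\tau)^{\nu+1}\le |\tau|^\nu t$---matches the paper exactly, and your reading of the condition $\delta<(\mathrm{inj}(M)/\mathrm{diam}(M))^2$ is correct. The difference lies entirely in how the ``technical heart'' (your convolution estimate) is handled, and here you are making life much harder than necessary.

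The paper never attempts to control $\e_s*\e_{s'}$ directly. Instead it invokes the Hess--Schrader--Uhlenbrock domination $|p_t^L|\le e^{\alpha t}p_t^\Delta$ (a Kato-type inequality, using self-adjointness of $L$) and, via Theorem~\ref{ThmStrongEstimates} together with the Gaussian \emph{lower} bound for $p_t^\Delta$, the analogous bound $|\e_t^\nu|\le e^{\alpha' t}p_t^\Delta$. Once every factor in the telescoping sum---both the $p^L$'s and the $\e^\nu$'s---is majorized by a constant times the \emph{scalar} heat kernel $p^\Delta$, the convolution collapses exactly by the semigroup property $p_{s}^\Delta*p_{s'}^\Delta=p_{s+s'}^\Delta$, with no curvature corrections and no dependence on $N$ to worry about. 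This immediately gives
\[
\bigl|p_t^L-\e_{\Delta_1}^\nu*\cdots*\e_{\Delta_N}^\nu\bigr|
\le e^{\alpha t}\sum_{j=1}^N p_{\tau_{j-1}}^\Delta*\bigl|p_{\Delta_j}^L-\e_{\Delta_j}^\nu\bigr|*p_{t-\tau_j}^\Delta,
\]
and the near-diagonal part of $|p_{\Delta_j}^L-\e_{\Delta_j}^\nu|$ contributes $C\Delta_j^{\nu+1}p_{\Delta_j}^\Delta$, which convolves to $C\Delta_j^{\nu+1}p_t^\Delta$ on the nose. Your proposed route through Laplace's method on $\e_s*\e_{s'}$ and iteration is in principle workable, but the uniformity-in-$N$ problem you flag is real and entirely avoidable.

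For the off-diagonal piece (where $1-\chi$ is supported), the paper isolates this as a separate lemma: one must bound
\[
\int_{d(z_0,z_1)\ge R} p_{\tau_{j-1}}^\Delta(x,z_0)\,p_{\Delta_j}^\Delta(z_0,z_1)\,p_{t-\tau_j}^\Delta(z_1,y)\,\dd(z_0,z_1)
\]
by $Ce^{-\epsilon/\Delta_j}p_t^\Delta(x,y)$ under the hypothesis $\Delta_j\le\delta t$. Your heuristic that the Gaussian at scale $\Delta_j$ and distance $\ge\mathrm{inj}(M)$ beats the Gaussian at scale $t$ and distance $\le\mathrm{diam}(M)$ is exactly the mechanism, but it needs to be packaged as a precise estimate on this triple convolution (using both the upper and lower Gaussian bounds for $p^\Delta$); summing the resulting exponentially small terms over $j$ then gives something dominated by $|\tau|^\nu t\,p_t^\Delta$.
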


This approximation result can be used in different regimes: If one fixes $t>0$, one can make the $C^k$ difference in  between $p_t^L$ and $\e_{\Delta_1\tau}^\nu * \cdots * \e_{\Delta_N \tau}^\nu$ smaller than any given $\varepsilon>0$, by choosing a partition $\tau$ fine enough. On the other hand, by choosing $\nu$ large enough, this error can be made uniform in $t$.

The estimate from Thm.~\ref{ThmConvolutionApproximation} is an {\em a posteriori} estimate, in the sense that the error depends on $p_t^\Delta(x, y)$, which itself is the (a priori unknown) solution to a differential equation. One can obtain an {\em a priori} estimate by using the Gaussian estimate from above \cite[Thm.~5.3.4]{hsu}, $p_t^\Delta(x, y) \leq C t^{-n/2+1/2}\e_t(x, y)$, which holds on compat Riemannian manifolds: One gets that one can replace the result of Thm.~\ref{ThmConvolutionApproximation} by the estimate
\begin{equation} \label{APrioriEstimate}
  \left|p_t^L(x, y) - \bigl(\e_{\Delta_1\tau}^\nu * \cdots * \e_{\Delta_N \tau}^\nu\bigr)(x, y) \right| \leq C\,  \e_t(x, y)\,|\tau|^\nu \,t^{3/2+\nu-n/2}.
\end{equation}
This is a weaker statement however, since for example if $(x, y) \in M \bowtie M$, one even has $p_t^\Delta(x, y) \leq C\e_t(x, y)$ for all $0 < t \leq T$, so in this case, the additional factor of $t^{-n/2}$ can be dropped on the right hand side of \eqref{APrioriEstimate} (with the constant being uniform over compact subsets of $M \bowtie M$ in this case).

Similar approximation schemes and their relation to finite-dimensional approximation of path integrals have also been considered by Fine and Sawin, who use these to give a ``path integral proof'' of the Atiyah-Singer index theorem, see \cite{FineSawin}, \cite{FineSawin2} or \cite{FineSawin3}. 

\medskip

In this paper, we use Thm.~\ref{ThmStrongEstimates} to analyze the short time asymptotics of the heat kernel at the cut locus. We show that if the set of minimizing geodesics between $x$ and $y$ is a disjoint union of $k$ submanifolds of the space of finite energy paths connecting $x$ and $y$, having dimensions $d_1, \dots, d_k$ (see Def.~\ref{DefNonDegenerate} below), then under a natural non-degeneracy condition, the heat kernel has an asymptotic expansion of the form
\begin{equation*}
   \frac{p_t^L(x, y)}{\e_t(x, y)} ~\sim~ \sum_{l=1}^k (4\pi t)^{-d_l/2} \sum_{j=0}^\infty t^j \frac{\Phi_{j, l}(x, y)}{j!},
\end{equation*}
as $t\rightarrow 0$. In order to derive this result, we show that the convolution product $\e_{\Delta_1\tau}^\nu * \cdots * \e_{\Delta_N \tau}^\nu$ can be written as an integral over a certain space of piecewise geodesics paths, which can then be evaluated with Laplace's method. See e.g.\ \cite{molchanov}, \cite{NeelStrook} or \cite{InahamaTaniguchi}, wo obtain similar results using methods from stochastic analysis.

\medskip

This paper is organized as follows. First we summarize some facts about the solution theory of the wave equation and introduce the transformation formula, which relates it to the heat equation. Here we also highlight some conditions for the Laplace type operator that suffice to have the transmutation formula valid on complete manifolds and we use the formula to prove some results on essential self-adjointness. Subsequently, in Section~\ref{HeatKernelAsymptotics}, we introduce the Hadamard expansion of the solution operator to the heat equation and combine it with the transmutation formula to prove Thm.~\ref{ThmStrongEstimates}. We also briefly demonstrate how the well-known Gaussian estimates from above and below are derived using this technique. In the next section, we give a proof of Thm.~\ref{ThmConvolutionApproximation}. In a final section, we reformulate this convolution product as a path integral, which is then analyzed to obtain an asymptotic expansion of the heat kernel $p_t^L(x, y)$ also in the case that $x$ and $y$ lie in each other's cut locus. In an appendix, we prove a general version of Laplace's method, which is needed in our considerations.

\medskip

\textbf{Acknowledgements}. I would like to thank Christian Bär, Rafe Mazzeo, Franziska Beitz, Florian Hanisch and Ahmad Afuni for helpful discussions. Furthermore, I am indepted to Potsdam Graduate School, The Fulbright Program, SFB 647 and the Max-Planck-Institute for Mathematics in Bonn for financial support.

\section{The Wave Equation and the Transmutation formula} \label{SectionWaveTransmutation}

Let $M$ be a complete Riemannian manifold of dimension $n$ and let $\V$ be a metric vector bundle over $M$. A Laplace type operator $L$ on $\V$ is a second order differential operator acting on sections of $\V$, which in local coordinates is given by
\begin{equation*}
  L = - \id_{\V} \,g^{ij} \frac{\partial^2}{\partial x^i \partial x^j} + \text{lower order terms},
\end{equation*}
where $(g^{ij})$ is the inverse matrix of the matrix $(g_{ij})$ describing the metric in the local coordinates. Considered as an unbounded operator on $L^2(M, \V)$, a natural domain for $L$ is the space $\mathscr{D}(M, \V) := C^\infty_c(M, \V)$, the space of smooth, compactly supported sections of the bundle $\V$ (which, when necessary, is endowed with the usual test function topology). We say that $L$ is formally self-adjoint if it is symmetric on this domain. 

Given such a Laplace type operator $L$, one can consider the {\em wave equation}
\begin{equation} \label{WaveEquation}
  (\partial_{tt} +L)u_t = 0.
\end{equation}
A fundamental feature of the wave equation is the {\em energy estimate}, which states that for any compact set $K \subseteq M$, any $m \in \R$ and any $T>0$, there exists a constant $\alpha \in \R$ such that for all smooth solutions $u$ of the wave equation with $\mathrm{supp}\, u_0 \subseteq K$, one has
\begin{equation} \label{EnergyEstimate}
  \|u_t\|_{H^m}^2 + \|u_t^\prime\|_{H^{m-1}}^2 \leq e^{\alpha (t-s)} \bigl(\|u_s\|_{H^m}^2 + \|u_s^\prime\|_{H^{m-1}}^2\bigr)
\end{equation}
whenever $-T \leq s \leq t \leq T$ (see e.g.\ \cite[Thm.~8]{BaerTagneWafo}). 

From the theory of wave equations follows that there is a family of solution operators $G_t: \mathscr{D}(M, \V) \longrightarrow \mathscr{D}(M, \V)$ such that for $\psi \in \mathscr{D}(M, \V)$, $u_t := G_t \psi$  solves the wave equation \eqref{WaveEquation} with initial conditions $u_0 = 0$, $u_0^\prime = \psi$. We also have its derivative $G_s^\prime$, which has the property that $u_t := G_t^\prime \psi$ solves the wave equation with initial condition $u_0 = \psi$, $u^\prime_0 = 0$ (see e.g.\ Corollary~14 in \cite{BaerTagneWafo}). 

Instead of the wave equation, we can also consider the {\em heat equation}
\begin{equation} \label{HeatEquation}
  (\partial_{t} +L)u_t = 0.
\end{equation}
Here we only need to specify an initial condition $\psi$ at time zero to have a unique (bounded) solution. This leads to a solution operator $e^{-tL}$, mapping the initial condition $\psi$ to the solution $u_t$. The heat equation is related to the  wave equation  as follows.
  
\begin{theorem}[Transmutation Formula] \label{ThmTransmutation}
   Let $M$ be a complete Riemannian manifold and let $L$ be a Laplace type operator, acting on sections of a metric vector bundle $\V$ over $M$. Suppose that the wave operators $G_t$ and $G_t^\prime$ defined on $\mathscr{D}(M, \V)$ extend to strongly continuous families of operators on $L^2(M, \V)$ satisfying the norm bound
   \begin{equation} \label{NormBoundG}
     \|G_t\|, \|G_t^\prime \| \leq C e ^{\alpha |t|}
   \end{equation}
   for some $C>0$, $\alpha \in \R$. Then setting
      \begin{equation} \label{TransmutationFormulaOp}
     e^{-tL}u = \int_{-\infty}^\infty \gamma_t(s) G^\prime_s u \,\dd s, ~~~~~~\text{with}~~~~~~\gamma_t(s) := (4\pi t)^{-1/2} e^{-s^2/4t}
   \end{equation}
   for $u \in L^2(M, \V)$ defines a strongly continuous semigroup of operators, the infinitesimal generator of which is an extension of $L$ with  $\mathrm{dom}(L)=\mathscr{D}(M, \V)$.
 \end{theorem}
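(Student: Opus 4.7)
The strategy is to verify directly from the definition \eqref{TransmutationFormulaOp} the four properties needed: well-definedness, strong continuity, the semigroup property, and that the generator extends $L$. Since $\|\gamma_t(s) G^\prime_s u\|_{L^2} \leq C(4\pi t)^{-1/2} e^{-s^2/4t + \alpha|s|} \|u\|_{L^2}$ is integrable in $s$, the Bochner integral converges in $L^2(M, \V)$ and defines bounded operators $e^{-tL}$, with a norm bound obtained by completing the square. Strong continuity for $t > 0$ follows from dominated convergence; at $t = 0$, I would write $e^{-tL} u - u = \int \gamma_t(s)(G^\prime_s u - u)\, \dd s$ and split into $|s| \leq \delta$ and $|s| > \delta$, controlling the first by strong continuity of $s \mapsto G^\prime_s u$ at $s = 0$ and the second by Gaussian tail estimates against the exponential bound \eqref{NormBoundG}.

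The semigroup property rests on the functional equation $G^\prime_s G^\prime_r = \tfrac{1}{2}(G^\prime_{s+r} + G^\prime_{s-r})$. Establishing this needs two applications of wave-equation uniqueness in the $L^2$ setting, which I would obtain by extending the energy estimate \eqref{EnergyEstimate} from $\mathscr{D}(M, \V)$ to $L^2$-initial data by density. First, $G^\prime_{-s} = G^\prime_s$, since $t \mapsto u_{-t}$ solves the wave equation with the same Cauchy data as $u_t$. Second, fixing $r$ and $f$, both sides of the functional equation, as functions of $s$, satisfy $(\partial_s^2 + L)\phi_s = 0$ with $\phi_0 = G^\prime_r f$ and $\partial_s \phi_0 = 0$, the vanishing derivative on the right-hand side being a consequence of the evenness just established. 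Combining the identity with the Gaussian convolution formula $\gamma_{t_1} * \gamma_{t_2} = \gamma_{t_1 + t_2}$ and applying Fubini together with the change of variables $(s,r) \mapsto (s \pm r, r)$ converts $e^{-t_1 L} e^{-t_2 L} u$ into $e^{-(t_1 + t_2) L} u$.

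For the generator, fix $u \in \mathscr{D}(M, \V)$, so that $Lu, L^2 u \in \mathscr{D}(M, \V)$ (differential operators do not enlarge support), and note that a uniqueness argument analogous to the above gives the commutation $G^\prime_s L u = L G^\prime_s u$. Taylor expanding in $s$ using $\partial_s^2 G^\prime_s u = -L G^\prime_s u$ together with $G^\prime_0 u = u$ and $\partial_s G^\prime_0 u = 0$ yields
\begin{equation*}
  G^\prime_s u = u - \tfrac{s^2}{2} L u + R(s), \qquad \|R(s)\|_{L^2} \leq C |s|^4 e^{\alpha|s|} \|L^2 u\|_{L^2}.
\end{equation*}
Integrating against $\gamma_t$ and using the Gaussian moments $\int \gamma_t\, \dd s = 1$, $\int s^2 \gamma_t\, \dd s = 2t$, and $\int s^4 e^{\alpha|s|} \gamma_t\, \dd s = O(t^2)$ (by a Gaussian shift) gives $e^{-tL} u = u - t L u + O(t^2)$ in $L^2$, hence $(e^{-tL} u - u)/t \to -Lu$ as $t \to 0^+$, so the generator extends $L$ on $\mathscr{D}(M, \V)$.

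The main obstacle I expect is the functional equation $G^\prime_s G^\prime_r = \tfrac{1}{2}(G^\prime_{s+r} + G^\prime_{s-r})$: since formal self-adjointness is not assumed, one cannot just invoke spectral calculus, and must instead carefully promote wave-equation uniqueness from compactly supported to $L^2$ data and justify the composition $G^\prime_s G^\prime_r$ as an operator on $L^2$. Once this identity is in place, the remaining arguments reduce to standard Gaussian moment estimates and approximate identity arguments.
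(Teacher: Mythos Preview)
Your proposal is correct and follows the same overall architecture as the paper (well-definedness via the norm bound, strong continuity via an approximate-identity argument, a trigonometric identity for the semigroup law, and a direct computation for the generator), but the two key steps are carried out differently.

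For the semigroup property, you use the product-to-sum formula $G^\prime_s G^\prime_r = \tfrac{1}{2}(G^\prime_{s+r} + G^\prime_{s-r})$, established from evenness of $s \mapsto G^\prime_s$ and wave-equation uniqueness. The paper instead uses the variant $G^\prime_s G^\prime_t \psi = G^\prime_{s+t}\psi \mp G_s G_t L\psi$ on $\mathscr{D}(M,\V)$ and disposes of the second term by the \emph{oddness} of $s \mapsto G_s$ under the double Gaussian integral. Your route is slightly more self-contained (it never leaves the family $G^\prime$), while the paper's route avoids the preliminary evenness step at the cost of invoking $G_s$ and an extra cancellation. Regarding your stated obstacle: you do not actually need to promote uniqueness to $L^2$ data. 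Since $G^\prime_r$ maps $\mathscr{D}(M,\V)$ to itself (finite propagation speed plus smoothness of wave solutions), the identity can be verified on $\mathscr{D}(M,\V)$ via the energy estimate exactly as stated; both sides being bounded on $L^2$ by hypothesis, density then extends it. This is precisely how the paper proceeds.

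For the generator, you Taylor-expand $G^\prime_s u$ to fourth order and integrate against the Gaussian moments. The paper instead differentiates under the integral, uses the one-dimensional heat equation $\partial_t \gamma_t = \partial_s^2 \gamma_t$, and integrates by parts twice in $s$ (using $\partial_s^2 G_s\psi = -G_s L\psi$) to obtain the stronger identity $\partial_t P_t\psi = -P_t L\psi$ for \emph{all} $t>0$, not just the limit at $t=0$. Your argument is perfectly adequate for the statement as written; the paper's gives a bit more and is marginally shorter since no remainder estimate is needed.
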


\begin{remark}
Of course, the continuous extensions of $G_t$ respectively $G_t^\prime$, if they exist, are unique, since $\mathscr{D}(M, \V)$ is dense in $L^2(M, \V)$.
\end{remark}

\begin{remark}
The same result is true when $L^2(M, \V)$ is replaced by any Banach space $E$ of distributions containing $\mathscr{D}(M, \V)$ as a dense subset and such that the inclusion of $E$ into $\mathscr{D}^\prime (M, \V)$ is continuous.
\end{remark}

\begin{proof}
Define for $u \in L^2(M, \V)$
\begin{equation*}
  P_t u := \int_{-\infty}^\infty \gamma_t(s) G^\prime_s u \dd s.
\end{equation*}
  By the norm bound on $G_t^\prime$, the integral on the right hand \eqref{TransmutationFormulaOp} converges absolutely for each $t>0$, and $P_t$ is a locally uniformly bounded family of operators. We now verify that $P_t$ is a strongly continuous semigroup. First, because $\gamma_t$ integrates to one over the line, we have
  \begin{equation*}
    \|P_t u - u\|_{L^2} = \left\|\int_{-\infty}^\infty \gamma_t(s) (G^\prime_s u - u)\dd s\right\|_{L^2} \leq \int_{-\infty}^\infty \gamma_t(s) \|G^\prime_s u - u\|_{L^2}.
  \end{equation*}
  for all $u \in L^2(M, \V)$.
  Because $G_s^\prime$ is strongly continuous by assumption and $G^\prime_0 u = u$, the function $\|G^\prime_s u - u\|_{L^2}$ is continuous in $s$ and vanishes at zero. Now $ \|P_t u - u\|_{L^2}\rightarrow 0$ follows from the well-known fact that $\gamma_t \rightarrow \delta_0$ as $t\rightarrow 0$.
  
To verify the semigroup property, we use that for any $s, t \in \R$ and $\psi \in \mathscr{D}(M, \V)$, we have the ``trigonometric formula''
\begin{equation*}
  G_s^\prime G_t^\prime \psi = G_{s+t}^\prime \psi - G_s G_t L \psi,
\end{equation*}
which can easily be verified by fixing $s$ and noticing that both sides satisfy the wave equation with respect to the variable $t$ and with the same initial conditions. The energy estimate \eqref{EnergyEstimate} implies then that their difference must be zero. Now
\begin{equation*}
\begin{aligned}
  (\varphi, P_sP_t\psi)_{L^2} &= \int_{-\infty}^\infty \int_{-\infty}^\infty \gamma_t(u)\gamma_s(v) (\varphi, G^\prime_uG^\prime_v\psi)_{L^2} \,\dd v \dd u\\
  &= \int_{-\infty}^\infty \int_{-\infty}^\infty \gamma_t(u)\gamma_s(v) \bigl((\varphi, G_{u+v}^\prime\psi)_{L^2} - (\varphi, G_uG_v L\psi)_{L^2}\bigr) \dd v \dd u,
\end{aligned}
\end{equation*}
where the integral over each individual term is absolutely convergent by the bound \eqref{NormBoundG} on $G_u$ and $G_u^\prime$.
Because $G_u$ is an odd function of $u$, the term involving $G_u G_v L\psi$ integrates to zero. Therefore
\begin{equation*}
\begin{aligned}
(\varphi, P_sP_t\psi)_{L^2} &= \int_{-\infty}^\infty \int_{-\infty}^\infty \gamma_t(u)\gamma_s(v) (\varphi, G_{u+v}^\prime\psi)_{L^2} \dd v \dd u \\
&= \int_{-\infty}^\infty \left(\int_{-\infty}^\infty \gamma_t(r-v)\gamma_s(v) \dd v\right) (\varphi, G_{r}^\prime\psi)_{L^2}  \dd r\\
&= \int_{-\infty}^\infty \gamma_{s+t}(r) (\varphi, G_{r}^\prime\psi)_{L^2}  \dd r = (\varphi, P_{t+s}\psi)_{L^2}.
\end{aligned}
\end{equation*}
Hence $P_sP_t = P_{t+s}$ on the dense subset $\mathscr{D}(M, \V) \subset L^2(M, \V)$ and by boundedness also on all of $L^2(M, \V)$. This shows that $P_t$ is a strongly continuous semigroup of operators.

To see what the infinitesimal generator of $P_t$ is, notice that for $\psi \in \mathscr{D}(M, \V)$, the estimates on $G_t$ and $G_t^\prime$ justify the calculation
\begin{equation*}
\begin{aligned}
  P_t^\prime \psi &= \int_{-\infty}^\infty \gamma_t^\prime(s) G_s^\prime u\, \dd s = \int_{-\infty}^\infty \frac{\partial^2}{\partial s^2} \gamma_t (s) G_s^\prime \psi\, \dd s \\
  &= \int_{-\infty}^\infty \frac{\partial}{\partial s} \gamma_t (s) G_s L \psi \,\dd s = -\int_{-\infty}^\infty \gamma_t (s) G_s^\prime L \psi \,\dd s = -LP_t \psi.
  \end{aligned}
\end{equation*}
This shows that the infinitesimal generator $\overline{L}$ (which is always closed for a strongly continuous semigroup) is an extension of the operator $L$ with $\mathrm{dom}(L) = \mathscr{D}(M, \V)$.
\end{proof}

In particular, the result is applicable to the compact setting:

\begin{lemma}
  For any Laplace type operator acting on sections of a metric vector bundle over a compact Riemannian manifold, the assumptions of Thm.~\ref{ThmTransmutation} are satisfied.
\end{lemma}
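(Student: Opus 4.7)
The plan is to directly invoke the energy estimate \eqref{EnergyEstimate} from the paper, which is available on any complete Riemannian manifold, and exploit the compactness of $M$ to obtain $L^2$-bounds for $G_t$ and $G_t'$ on the dense subset $\mathscr{D}(M,\V)=C^\infty(M,\V)$, from which continuous extension and strong continuity follow by standard arguments.

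First, since $M$ is compact, every smooth section has compact support, so the hypothesis of \eqref{EnergyEstimate} is automatic with $K=M$. For $\psi\in\mathscr{D}(M,\V)$, set $u_t:=G_t'\psi$, which solves the wave equation with $u_0=\psi$, $u_0'=0$. Applying \eqref{EnergyEstimate} with $m=0$ and $s=0$ gives
\begin{equation*}
\|G_t'\psi\|_{L^2}^2 \leq \|u_t\|_{L^2}^2 + \|u_t'\|_{H^{-1}}^2 \leq e^{\alpha t}\,\|\psi\|_{L^2}^2,
\end{equation*}
which is the desired bound. For $v_t:=G_t\psi$ the initial data are $v_0=0$, $v_0'=\psi$. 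Applying \eqref{EnergyEstimate} again with $m=0$ yields
\begin{equation*}
\|G_t\psi\|_{L^2}^2 \leq \|v_t\|_{L^2}^2 + \|v_t'\|_{H^{-1}}^2 \leq e^{\alpha t}\,\|\psi\|_{H^{-1}}^2,
\end{equation*}
and since $M$ is compact, the inclusion $L^2(M,\V)\hookrightarrow H^{-1}(M,\V)$ is continuous, so $\|\psi\|_{H^{-1}}\leq C\|\psi\|_{L^2}$; this produces the same type of exponential bound for $G_t$.

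Because $\mathscr{D}(M,\V)=C^\infty(M,\V)$ is dense in $L^2(M,\V)$, the uniform bounds above allow $G_t$ and $G_t'$ to be uniquely extended to bounded operators on $L^2(M,\V)$ with $\|G_t\|,\|G_t'\|\leq Ce^{\alpha|t|}$ (after allowing $t$ negative, which is handled by the time-reversal symmetry of the wave equation). For strong continuity at a fixed $t_0$, pick $u\in L^2(M,\V)$ and $\varepsilon>0$; choose $\psi\in\mathscr{D}(M,\V)$ with $\|u-\psi\|_{L^2}<\varepsilon$, and estimate
\begin{equation*}
\|G_t'u - G_{t_0}'u\|_{L^2} \leq \|G_t'(u-\psi)\|_{L^2} + \|G_t'\psi - G_{t_0}'\psi\|_{L^2} + \|G_{t_0}'(\psi-u)\|_{L^2}.
\end{equation*}
The outer two terms are bounded by $2Ce^{\alpha T}\varepsilon$ for $t$ in a compact neighborhood of $t_0$, while the middle term tends to zero as $t\to t_0$ because $s\mapsto G_s'\psi$ is a smooth $\V$-valued function on the compact manifold $M$ (smoothness of solutions to the wave equation with smooth compactly supported data). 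The same reasoning applies to $G_t$, completing the verification of the hypotheses of Thm.~\ref{ThmTransmutation}.

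There is no genuine obstacle here: the only mildly nontrivial point is that the energy estimate is most naturally phrased in Sobolev norms, so one must match Sobolev indices carefully; the trick for $G_t$ is to lose one derivative on the input by passing through $H^{-1}$, which is harmless because $M$ is compact.
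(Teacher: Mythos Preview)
Your argument is correct and is essentially an expanded version of the paper's own proof, which simply asserts that the norm bound \eqref{NormBoundG} follows from the energy estimate \eqref{EnergyEstimate} with $K=M$ and that one may take the same $\alpha$ for every $T$. You additionally spell out the Sobolev-index bookkeeping (the $L^2\hookrightarrow H^{-1}$ step needed for $G_t$) and the density/$\varepsilon$-over-three argument for strong continuity, both of which the paper leaves implicit.
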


\begin{proof}
  The bound \eqref{NormBoundG} follows directly from the energy estimate \eqref{EnergyEstimate} in this case, since one can take $K = M$ and it is also clear that one can take the same $\alpha$ for each $T$. 
\end{proof}

Furthermore, it is well-known that on a compact manifold, any Laplace type operator $L$ has a unique closed extension that is the generator of a strongly continuous semigroup (this follows e.g.\ from Lemma~2.16 in \cite{bgv}).

A consequence of Thm.~\ref{ThmTransmutation} is the following.

\begin{theorem} \label{ThmSelfadjointCase}
Let $L$ be a formally self-adjoint Laplace type operator, acting on sections of a metric vector bundle over a complete Riemannian manifold. Considered as an unbounded symmetric operator with domain $\mathscr{D}(M, \V)$, $L$ admits at most one self-adjoint extension $\overline{L}$ that generates a strongly continuous semigroup of operators. If there is such an extension, then the assumptions of Thm.~\ref{ThmTransmutation} are satisfied.
\end{theorem}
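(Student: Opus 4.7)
The plan is to build the wave operators from $\overline{L}$ via functional calculus and then invoke Theorem~\ref{ThmTransmutation}. Assume $\overline{L}$ is a self-adjoint extension of $L$ generating a strongly continuous semigroup. Since a self-adjoint operator generates a $C_0$-semigroup if and only if it is bounded below, there exists $c \geq 0$ with $\overline{L} \geq -c$. Viewing $\cos(t\sqrt\lambda)$ and $\sin(t\sqrt\lambda)/\sqrt\lambda$ as entire functions of $\lambda$ through their power series in $\lambda$, the spectral theorem yields bounded operators
\begin{equation*}
  \tilde G_t^\prime := \cos\bigl(t\sqrt{\overline{L}}\bigr), \qquad \tilde G_t := \frac{\sin(t\sqrt{\overline{L}})}{\sqrt{\overline{L}}},
\end{equation*}
depending strongly continuously on $t$ and satisfying $\|\tilde G_t^\prime\|, \|\tilde G_t\| \leq C e^{\alpha|t|}$ with $\alpha = \sqrt{c}$.

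The key step is to identify $\tilde G_t^\prime$ and $\tilde G_t$ with the classical wave operators $G_t^\prime$ and $G_t$ on $\mathscr{D}(M, \V)$. For $\psi \in \mathscr{D}(M, \V)$, finite propagation speed on the complete manifold $M$ guarantees that $u_t := G_t^\prime \psi$ remains smooth and compactly supported for all $t$, hence $u_t \in \mathscr{D}(M, \V) \subseteq \mathrm{dom}(\overline{L})$ and $u_t^{\prime\prime} = -\overline{L} u_t$. Meanwhile $\tilde u_t := \tilde G_t^\prime \psi$ is a $C^2$ curve in $L^2(M, \V)$ taking values in $\mathrm{dom}(\overline{L})$ that satisfies the same abstract wave equation with matching initial data. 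An energy argument for $v_t := u_t - \tilde u_t$ using the functional
\begin{equation*}
  E(t) := \|v_t^\prime\|_{L^2}^2 + \bigl\|(\overline{L} + c + 1)^{1/2} v_t \bigr\|_{L^2}^2,
\end{equation*}
combined with Gr\"onwall's inequality, forces $E \equiv 0$ and hence $\tilde G_t^\prime = G_t^\prime$ on $\mathscr{D}(M, \V)$; analogously $\tilde G_t = G_t$. The classical propagators therefore extend to strongly continuous bounded families on $L^2(M, \V)$ with the norm bound \eqref{NormBoundG}, verifying the hypotheses of Theorem~\ref{ThmTransmutation}.

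For uniqueness, suppose $\overline{L}_1$ and $\overline{L}_2$ are two self-adjoint extensions of $L$ each generating a $C_0$-semigroup. By the previous paragraph, $\cos(t\sqrt{\overline{L}_i})$ coincides on $\mathscr{D}(M, \V)$ with the classical wave propagator $G_t^\prime$, which depends only on $L$ acting on $\mathscr{D}(M, \V)$. Density and boundedness give $\cos(t\sqrt{\overline{L}_1}) = \cos(t\sqrt{\overline{L}_2})$ on all of $L^2(M, \V)$ for every $t \in \R$. Applying the scalar identity $\int_{-\infty}^\infty \gamma_t(s) \cos(s\sqrt\lambda) \,\dd s = e^{-t\lambda}$ spectrally then yields $e^{-t\overline{L}_1} = e^{-t\overline{L}_2}$ for all $t > 0$, and uniqueness of the generator of a $C_0$-semigroup forces $\overline{L}_1 = \overline{L}_2$.

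The main obstacle is the identification step: it requires finite propagation speed (to keep the classical wave solutions inside $\mathrm{dom}(\overline{L})$) together with careful $L^2$-regularity bookkeeping to make the energy method rigorous for the abstract wave equation. Once this identification is in place, everything else is a direct application of the spectral theorem and Theorem~\ref{ThmTransmutation}.
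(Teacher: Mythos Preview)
Your proposal is correct and follows essentially the same strategy as the paper: build the wave propagators from $\overline{L}$ by functional calculus, identify them with the classical propagators $G_t, G_t^\prime$ on $\mathscr{D}(M,\V)$, and deduce uniqueness from the fact that the resulting semigroup is determined by $G_t^\prime$ alone. The only notable difference is in the identification step: the paper invokes the classical Sobolev energy estimate \eqref{EnergyEstimate} directly on the difference $G_s\psi - g_s(\overline{L})\psi$, whereas you set up a separate abstract $L^2$ energy functional and run Gr\"onwall. Your version is arguably more careful, since $g_s(\overline{L})\psi$ is not a priori compactly supported and so does not literally fall under the hypotheses of \eqref{EnergyEstimate}; your use of finite propagation speed to keep $G_t^\prime\psi$ inside $\mathrm{dom}(\overline{L})$ and then argue purely in $L^2$ sidesteps this. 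For uniqueness, the paper routes through Theorem~\ref{ThmTransmutation} and a Fubini computation to match the transmuted semigroup with $e^{-t\overline{L}}$, while you compute $\int_{-\infty}^\infty \gamma_t(s)\cos(s\sqrt{\lambda})\,\dd s = e^{-t\lambda}$ spectrally; these are the same calculation packaged slightly differently.
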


\begin{proof}

Let $\overline{L}$ be a self-adjoint extension of $L$ that generates a strongly continuous semigroup of operators. By the Hille-Yosida theorem, there exists $\omega \in \R$ such that the spectrum of $\overline{L}$ is contained in $[\omega, \infty)$, and $e^{-t\overline{L}}$ is given in terms of spectral calculus via the absolutely convergent integral
\begin{equation*}
  (u, e^{-t\overline{L}} v)_{L^2} = \int_{-\infty}^\infty e^{-t\lambda} \dd (u, E_\lambda v)_{L^2},
\end{equation*}
for $u, v \in L^2(M, \V)$, where $E_\lambda$ is the spectral measure associated to $\overline{L}$. Consider the entire function
\begin{equation*}
  g_s(\lambda) := \sum_{k=0}^\infty \frac{s^{2k+1} \lambda^k}{(2k+1)!}
\end{equation*}
For $\lambda > 0$, we have $g_s(\lambda) = \sin(s\sqrt{\lambda})/\sqrt{\lambda}$ and $g_s(-\lambda) = \sinh(s\sqrt{\lambda})/\sqrt{\lambda}$, while $g_s^\prime(\lambda) = \cos(s \sqrt{\lambda})$ and $g_s^\prime(-\lambda) = \cosh(s\sqrt{\lambda})$. Hence we obtain that 
\begin{equation*}
\bigl\| g_s|_{[\omega, \infty)}\bigr\|_\infty \leq e^{s \omega}, ~~~~ \bigl\| g_s^\prime|_{[\omega, \infty)}\bigr\|_\infty \leq e^{s \omega}.
\end{equation*}
By standard properties of the functional calculus, one obtains the estimates $\|g_s(\overline{L})\| \leq e^{s\omega}$, $\|g_s^\prime(\overline{L})\| \leq e^{s\omega}$ on the operator norms. 

We now claim that the wave operator $G_s$ on $\mathscr{D}(M, \V)$ is given by $G_s = g_s(\overline{L})|_{\mathscr{D}(M, \V)}$. To see this, notice that for any $\psi \in \mathscr{D}(M, \V)$, $g_s(\overline{L}) \psi$ satisfies the wave equation \eqref{WaveEquation} with initial conditions $g_0(\overline{L})\psi = 0$, $g_0^\prime(\overline{L}) \psi = \psi$. Hence $u_s := G_s\psi - g_s(\overline{L}) \psi$ satisfies the wave equation with initial conditions $u_0 = 0$, $u_0^\prime = 0$, which implies $u_s \equiv 0$ by the energy estimate \eqref{EnergyEstimate}. The same argument shows that $G_s^\prime = g_s^\prime(\overline{L})|_{\mathscr{D}(M, \V)}$.

By the above, $G_s$ and $G_s^\prime$ satisfy the norm bound \eqref{NormBoundG}. To see that $G_s$ and $G_s^\prime$ are strongly continuous, we argue as follows: By Lebesgue's theorem of dominated convergence, one obtains that for all $u, v \in L^2(M, \V)$, one has $(u, G_s v)_{L^2} \rightarrow (u, G_t v)_{L^2}$ as $s \rightarrow t$, i.e.\ $G_sv \rightarrow G_t v$ weakly. Similarly, $\|G_s v\|_{L^2} = (v, G_s^2 v)_{L^2} \longrightarrow (v, G_t^2 v)_{L^2} = \|G_t v\|_{L^2}$. Now it is well-known that in Hilbert spaces, weak convergence plus convergence {\em of} norms implies convergence {\em in} norm, so we obtain $G_s v \rightarrow G_t v$ in $L^2(M, \V)$. This shows that $G_s$ is strongly continuous and the argument for $G_s^\prime$ is the same.

Now by Thm.~\ref{ThmTransmutation}, there is some extension $\overline{L}_2$ of $L$ with domain containing $\mathscr{D}(M, \V)$ that generates a strongly continuous semigroup of operators given by the transmutation formula \eqref{TransmutationFormulaOp}. However, by Fubini's theorem, 
\begin{equation*}
  \int_{-\infty}^\infty \gamma_t(s) (u, G^\prime_s v)_{L^2} \dd s = \int_{-\infty}^\infty  \left(\int_{-\infty}^\infty \gamma_t(s) g_s(\lambda)  \dd s \right) \dd (u, E_\lambda v)_{L^2},
\end{equation*}
where the interior integral is easily found to equal $e^{-t\lambda}$, e.g.\ by expanding $g_s(\lambda)$ into its power series and using standard formulas for the moments of a one-dimensional Gaussian measure (see e.g.\ Lemma~2.12 in \cite{bgv}). Therefore, the semigroup generated by $\overline{L}_2$ equals the semigroup generated by $\overline{L}$.

These arguments show that self-adjoint extension of $L$ generating a strongly continuous semigroup of operators, this semigroup is given by the transmutation formula \eqref{TransmutationFormula}. However, this formula does not depend on the self-adjoint extension (because the operator $G_s$ doesn't), so any two strongly continuous operator semigroups generated by self-adjoint extensions of $L$ must concide. But this implies that also the self-adjoint extensions coincide, because the infinitesimal generator of an operator semigroup is unique.
\end{proof}

\begin{example}
 For example, if $L = \nabla^*\nabla + V$ for some connection $\nabla$ on $\V$ and a symmetric endomorphism field $V$ that is bounded from below (meaning that there exists $\omega \in \R$ such that $\<w, Vw\> > \omega$ for all $w \in \V$), then $L$ has a self-adjoint extension that generates a strongly continuous semigroup. Namely, because for $u \in L^2(M, \V)$, 
  \begin{equation*}
    (u, Lu)_{L^2} = \|\nabla u\|_{L^2} + (u, Vu)_{L^2} \geq \omega \|u\|_{L^2},
  \end{equation*}
  the operator $L$ is semi-bounded and it is well-known that it has a self-adjoint extension, called Friedrich extension (see e.g.\ \cite[VII.2.11]{werner95}), which satisfies the same bound and therefore generates an operator semigroup by functional calculus. We obtain that in this setting, the Friedrichs extension is the only self-adjoint extension that is the generator of a strongly continuous semigroup.
  
  In particular, this applies to $\Delta = d^* d$, the Laplace-Beltrami operator acting on functions. 
 \end{example}
 
 \begin{example}
 The Hodge Laplacian $L = (d + d^*)^2$ on differential forms is a positive operator and hence has a self-adjoint extension generating a strongly continuous semigroup by the same argument. By Thm.~\ref{ThmSelfadjointCase}, this is the only self-adjoint extension generating a strongly continuous semigroup of operators. In fact, it is the only self-adjoint extension, by Thm.~2.4 in \cite{Strichartz}. For the same reason, for any self-adjoint Dirac-type operator $D$, the corresponding Laplacian $D^2$ has a unique self-adjoint extension generating a strongly continuous semigroup of operators. Also in this case, it is known that $D$ and $D^2$ are even essentially self-adjoint (i.e.\ they have unique self-adjoint extensions), compare \cite{WolfDirac}.
\end{example}

\begin{example}
In contrast, there are formally self-adjoint Laplace type operators that do not admit any self-adjoint extension. For example, the operator $L = -\Delta - x^4$ on $M = \R$ does not admit a self-adjoint extension (see Ex.~3 on p.~86 in \cite{SchroedingerEquation}). There are also essentially self-adjoint Laplace type operators which do not generate a strongly continuous family of operators, see e.g.\ \cite{SimonFeynmanKac}.
\end{example}

\begin{remark}
Our observations show that matters can be quite subtle on general complete manifolds: A formally self-adjoint Laplace type operator need not have a self-adjoint extension, nor need it be unique. Furthermore, not all self-adjoint extensions generate a strongly continuous semigroup of operators (they do if and only if the spectrum is bounded from below). However, there is at most one self-adjoint extension that generates a strongly continuous semigroup of operators. We do not know of an example of a formally self-adjoint Laplace type operator that admits two different self-adjoint extensions, one of which generates a strongly continuous semigroup and the other doesn't (by Thm.~\ref{ThmSelfadjointCase}, not both of them can generate a strongly continuous semigroup of operators).
\end{remark}

\section{Heat Kernel Asymptotics} \label{HeatKernelAsymptotics}

In this section, we prove the following more general version of Thm.~\ref{ThmStrongEstimates}.

\begin{theorem}[Strong Heat Kernel Asymptotics] \label{ThmStrongEstimates2}
Let $L$ be a Laplace type operator, acting on sections of a vector bundle $\V$ over a complete Riemannian manifold $M$. Suppose that the assumptions of Thm.~\ref{ThmTransmutation} are satisfied (e.g.\ when $M$ is compact or $L$ is formally self-adjoint and semi-bounded). Then for any compact subset $K$ of $M \bowtie M$, any $T>0$ and any numbers $\nu, k, l, m \in \N_0$, there exists a constant $C>0$ such that 
\begin{equation*}
  \left| \frac{\partial^k}{\partial t^k}\nabla_x^l \nabla_y^m\left\{\frac{p_t^L(x, y)}{\e_t(x, y)} - \sum_{j=0}^\nu t^j \frac{\Phi_j(x, y)}{j!}\right\}\right| \leq C t^{\nu + 1-k}
\end{equation*}
for all $(x, y) \in K$, whenever $0 < t \leq T$. Here $\Phi_j(x, y)$ are certain smooth sections of the bundle $\V\boxtimes \V^*$ over $M \bowtie M$.
\end{theorem}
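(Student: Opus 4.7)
The plan is to combine the transmutation formula from Theorem~\ref{ThmTransmutation} with the Hadamard parametrix of the wave kernel. This is essentially Kannai's argument, upgraded to handle derivatives and the bundle-valued setting. First I would construct, for each sufficiently large $N \in \N_0$, a Hadamard-type parametrix for the wave kernel $G_s'$ on a neighborhood of the diagonal in $M \bowtie M$. It takes the form
\begin{equation*}
  \widetilde W_s^N(x, y) = \sum_{j=0}^N \Phi_j(x, y)\, F_j\bigl(s, d(x,y)\bigr),
\end{equation*}
where the coefficients $\Phi_j(x,y)$ are the classical heat kernel coefficients, determined recursively along the unique minimizing geodesic from $y$ to $x$ by the Hadamard transport equations, and the $F_j$ are universal Riesz-type distributions in $s$ of the appropriate homogeneity. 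The $F_j$ are arranged so that $(\partial_{ss} + L_x)\widetilde W_s^N$ is a smooth remainder $R_s^N(x,y)$ vanishing to high order in $s$ on the light cone $\{|s| = d(x,y)\}$, uniformly on the compact set $K$. This construction is standard (see e.g.\ \cite[III.E]{BergerGauduchonMazet}) and is well-defined on all of $M \bowtie M$ precisely because of the unique-geodesic condition.

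Next I would insert this parametrix into the transmutation formula \eqref{TransmutationFormulaOp}. Multiplying $\widetilde W_s^N$ by a smooth cutoff in $s$ supported in $\{|s| < \mathrm{inj}(M)\}$ and equal to $1$ near the origin costs only $O(e^{-c/t})$ thanks to the Gaussian factor $\gamma_t(s)$. The core computation is then the Gaussian moment identity
\begin{equation*}
  \int_{-\infty}^\infty \gamma_t(s)\, F_j\bigl(s, d(x,y)\bigr)\, ds = \e_t(x,y)\cdot \frac{t^j}{j!},
\end{equation*}
which is checked by expanding the Riesz distribution into its power series and using standard Gaussian moment formulas (compare \cite[Lemma~2.12]{bgv}). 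Summing over $j$ reproduces the formal series $\e_t(x,y) \sum_{j=0}^N t^j \Phi_j(x,y)/j!$ exactly.

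The remaining error is the transmutation integral of $\gamma_t(s)$ against $G_s'(x,y) - \chi(s)\widetilde W_s^N(x,y)$. By Duhamel's principle applied to the wave equation this difference is expressible as a time integral of $G_{s-\sigma}$ applied to the smooth remainder $R_\sigma^N$; combining the operator norm bound \eqref{NormBoundG}, Sobolev embedding to convert $L^2$ bounds into pointwise ones on the compact set $K$, and the light-cone vanishing of $R_\sigma^N$ then yields a pointwise bound of order $t^{N+1}\e_t(x,y)$ on the remainder. Choosing $N$ large enough relative to $\nu$ gives the required $Ct^{\nu+1}$ bound after dividing by $\e_t(x,y)$.

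The main obstacle is obtaining the higher-derivative estimates $\partial_t^k \nabla_x^l \nabla_y^m$ uniformly on compact subsets of $M \bowtie M$, which is why Kannai's original argument does not immediately give the strong statement. Spatial derivatives I would handle by differentiating the parametrix explicitly (smoothness of $\Phi_j$ and $F_j$ on $M \bowtie M$ keeps all derivatives bounded uniformly on $K$) and applying the energy estimate \eqref{EnergyEstimate} at higher Sobolev level to control the remainder term. Each $\partial_t$ applied to $\gamma_t(s)$ produces $\gamma_t(s)$ times a polynomial in $s^2/t$, costing exactly one power of $t$, which explains the factor $t^{\nu+1-k}$ on the right-hand side. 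The final bookkeeping -- verifying via the Leibniz rule that the exponential $e^{-d(x,y)^2/4t}$ arising from the Gaussian convolution cancels exactly the one in $\e_t(x,y)$ when one divides, uniformly over $K$, and that the cutoff error $O(e^{-c/t})$ is absorbed after this division -- is routine once one notes that $d(x,y)$ is bounded on $K$ and that $N$ can be chosen to buy back any finite number of negative powers of $t$ introduced by differentiating $1/\e_t(x,y)$.
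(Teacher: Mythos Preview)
Your overall architecture---transmutation formula plus Hadamard parametrix for the wave kernel, with the Gaussian moment identity producing $\e_t(x,y)\,t^j/j!$ term by term---is exactly the paper's strategy. The computation of the main term is the content of Lemma~\ref{LemmaRieszIntegral}, and the paper works with $G_s$ (after one integration by parts in the transmutation integral) rather than $G_s'$, which is a cosmetic difference.

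The substantive divergence is in the error analysis. You propose to control the difference $G_s'-\widetilde W_s^N$ via Duhamel, then pass from the operator bound \eqref{NormBoundG} to pointwise bounds by Sobolev embedding, invoking ``light-cone vanishing of $R_\sigma^N$'' to obtain $t^{N+1}\e_t(x,y)$. This step is not justified as written: Sobolev embedding converts $H^m$ control into $C^0$ control but does \emph{not} preserve the quantitative vanishing rate $\bigl(s^2-d(x,y)^2\bigr)_+^{k/2}$ at the light cone, and it is precisely this rate that, after integrating against $\gamma_t(s)$, produces the factor $\e_t(x,y)$ rather than merely a polynomial bound in $t$. Duhamel plus finite propagation speed will give you support in $\{|s|\ge d(x,y)\}$, hence one factor of $e^{-d(x,y)^2/4t}$ from the integration range, but extracting the correct powers of $t$ on top of that requires pointwise control of the remainder \emph{near} the light cone, not just $L^2$ control.

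The paper sidesteps this by invoking directly the pointwise estimate \eqref{ErrorEstimateWaveApprox} on $\delta^\nu=G-\sum_{j\le\nu}\Phi_j R(2+2j)$, taken from \cite[Thm.~2.5.2]{bgp}, rather than rederiving it via Duhamel. The key technical device you are missing is then the substitution $s=\sqrt{u^2+d(x,y)^2}$, which turns the remainder integral into
\[
\frac{\gamma_t\bigl(d(x,y)\bigr)}{t}\int_0^\infty e^{-u^2/4t}\,\tilde\delta^\nu(u,x,y)\,u\,du,
\]
so that the factor $\e_t(x,y)$ is extracted \emph{before} any further estimation. Powers of $t$ are then gained by repeatedly integrating by parts using $-\tfrac{2t}{u}\partial_u e^{-u^2/4t}=e^{-u^2/4t}$, with each integration by parts costing one derivative of $\tilde\delta^\nu$ (controlled by \eqref{ErrorEstimateWaveApprox}) and gaining one power of $t$. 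This replaces your Duhamel/Sobolev step entirely and handles the spatial and time derivatives uniformly.
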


The proof will use the transmutation formula \eqref{TransmutationFormula}, which in terms of integral kernels translates into 
\begin{equation} \label{TransmutationFormula}
  p_t^L(x, y) :=  \int_{-\infty}^\infty \gamma_t(s)\,G^\prime(s, x, y) \,\dd s,
\end{equation}
where $G^\prime(s, x, y)$ is the Schwartz kernel of the operator $G_s$. This integral is meant as a distributional integral, i.e.\ for test functions $\varphi \in \mathscr{D}(M, \V^*)$, $\psi \in \mathscr{D}(M, \V)$, we set
\begin{equation}\label{TransmutationFormula2}
  p_t^L [ \varphi \otimes \psi] := \int_{-\infty}^\infty \gamma_t(s)\,G_s^\prime[\varphi \otimes \psi] \,\dd s
\end{equation}
Let us verify that this indeed defines a distribution on $M \times M$ for each $t>0$, provided that the assumptions of Thm.~\ref{ThmTransmutation} hold. Namely, by the estimate on $G_s^\prime$, we have
\begin{equation} \label{EstimateIC}
  \bigl|G_s^\prime[ \varphi \otimes \psi]\bigr| = \bigl| (\varphi, G_s \psi)_{L^2}\bigr| \leq C e^{\alpha|s|} \|\varphi\|_{L^2}\|\psi\|_{L^2},
\end{equation}
which shows that the integral \eqref{TransmutationFormula} is absolutely convergent. We furthermore have
\begin{equation*}
  \bigl| p_t^L[\varphi \otimes \psi] \bigr| \leq \left(C \int_{-\infty}^\infty \gamma_t(s) e^{\alpha|s|}\dd s\right) \|\varphi\|_{L^2} \|\psi\|_{L^2}
\end{equation*}
for all $\varphi\in \mathscr{D}(M, \V^*)$, $\psi \in \mathscr{D}(M, \V)$, which shows that $p_t^L$ is indeed a well-defined distribution.

\medskip

The wave kernel $G(t, x, y)$ has an asymptotic expansion, the Hadamard expansion, which describes its singularity structure. To state the result, we introduce the {\em Riesz distributions} $R(\alpha; t, x, y) \in \mathscr{D}(M \bowtie M)$. Namely, for $\mathrm{Re}(\alpha)> n+1$, we set
\begin{equation*}
  R(\alpha;t, x, y) := C(\alpha)\, \mathrm{sign}(t) \,\bigl(t^2 - d(x, y)^2\bigr)_+^{\frac{\alpha-n-1}{2}}, ~~~~~ C(\alpha) := \frac{2^{1-\alpha} \pi^{\frac{1-n}{2}}}{\Gamma\left(\frac{\alpha}{2}\right)\Gamma\left(\frac{\alpha-n+1}{2}\right)},
\end{equation*}
where $(t^2 - d(x, y)^2)_+$ denotes the positive part. Hence $R(\alpha;t, x, y)$ is zero whenever $|t| \leq d(x, y)$ (the constant $C(\alpha)$ here equals the constant $C(\alpha, n+1)$ in Def.~1.2.1 of \cite{bgp} because our spacetime $\R \times M$ is $n+1$-dimensional. The distributions $R(\alpha)$ discussed here are related to the distributions $R_\pm(\alpha)$ in Section~1.4 of \cite{bgp} by $R(\alpha) = R_+(\alpha) - R_-(\alpha)$). For $\mathrm{Re}(\alpha) > n+1$, the $R(\alpha; t, x, y)$ are then continuous functions on $\R \times M \bowtie M$ and one can show that they define a holomorphic family of distributions on $\{\mathrm{Re}(\alpha) > n+1\}$ that has a holomorphic extension to all of $\C$  \cite[Lemma~1.2.2~(4)]{bgp}. This defines $R(\alpha; t, x, y) \in \mathscr{D}^\prime(\R \times M \bowtie M)$ for all $\alpha \in \C$.

Now on $M \bowtie M$, the distribution $G(t, x, y)$ has the asymptotic expansion \cite[Ch.~2]{bgp}
\begin{equation} \label{WaveAsymptotics}
  G(t, x, y) ~\sim~ \sum_{j=0}^\infty \Phi_j(x, y) R(2+2j; t, x, y),
\end{equation}
where the $\Phi_j(x, y) \in C^\infty(M \bowtie M, \V \boxtimes \V^*)$ are coeffients determined by  certain transport equations. The asymptotic expansion \eqref{WaveAsymptotics} is meant in the sense that the difference
\begin{equation} \label{DefinitionDifferenceTerm}
  \delta^\nu(t, x, y) := G(t, x, y) - \sum_{j=0}^\nu \Phi_j(x, y) R(2+2j; t, x, y)
\end{equation}
can be made arbitrarily smooth by increasing the number $\nu$ of correction terms; in fact, $\delta^\nu \in C^k(\R \times M \bowtie M, \V \boxtimes \V^*)$ whenever $\nu \geq (n+1)/2 + k$ \cite[Prop.\ 2.5.1]{bgp}. Furthermore, the fact that the wave equation has finite propagation speed (i.e.\ $G(t, x, y) \equiv 0$ on the region where $|t| < d(x, y)$) implies that when $\nu$ is so large that $\delta^\nu$ is $C^k$, one has the estimate 
\begin{equation} \label{ErrorEstimateWaveApprox}
  \left|\frac{\partial^j}{\partial t^j}\nabla^l_x \nabla^m_y \delta^\nu(t, x, y) \right| \leq C \bigl( t^2-d(x, y)^2\bigr)_+^{(k-j-l-m)/2}
\end{equation}
uniformly over compact subsets of $M \bowtie M$ and $t \leq T$, whenever $k\geq l+m$ (compare \cite[Thm.\ 2.5.2]{bgp}).

\begin{lemma} \label{LemmaRieszIntegral}
For all $j \in \N_0$, $t>0$ and all $(x, y) \in M \bowtie M$, we have
\begin{equation}
  \frac{1}{2t} \int_{-\infty}^\infty\gamma_t(s) R(2+2j; s, x, y) \,s\,\dd s 
 = \e_t(x, y) \frac{t^j}{j!},
\end{equation}
where $\e_t(x, y)$ is the Euclidean heat kernel, defined in \eqref{EuclideanHeatKernel}. In particular, the distributional integral on the left hand side actually yields a smooth function.
\end{lemma}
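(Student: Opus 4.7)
The plan is to treat the exponent $2+2j$ as a specialization of a complex parameter $\alpha$, evaluate the integral by direct calculation in the half-plane where $R(\alpha;\cdot,x,y)$ is a classical function, and then extend to the desired values of $\alpha$ by analytic continuation. Set $d := d(x,y)$. The first observation is parity: since $R(\alpha;s,x,y)$ is odd in $s$ (because of the $\mathrm{sign}(s)$ factor) and $\gamma_t(s)$ is even, the integrand $\gamma_t(s)R(\alpha;s,x,y)\,s$ is even, so
\begin{equation*}
\frac{1}{2t}\int_{-\infty}^\infty \gamma_t(s)R(\alpha;s,x,y)\,s\,\dd s
= \frac{C(\alpha)}{t}\int_0^\infty \gamma_t(s)\bigl(s^2-d^2\bigr)_+^{(\alpha-n-1)/2}\,s\,\dd s.
\end{equation*}

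The second step is to assume $\mathrm{Re}(\alpha)>n+1$, so the right-hand side is a classical absolutely convergent integral, and to substitute $u=s^2-d^2$. The exponential decouples and the remaining integral is a standard Gamma integral, giving
\begin{equation*}
\frac{C(\alpha)}{2t\sqrt{4\pi t}}\,e^{-d^2/4t}\int_0^\infty e^{-u/4t} u^{(\alpha-n-1)/2}\,\dd u
= \frac{C(\alpha)\,(4t)^{(\alpha-n+1)/2}\,\Gamma\!\bigl((\alpha-n+1)/2\bigr)}{2t\sqrt{4\pi t}}\,e^{-d^2/4t}.
\end{equation*}
Inserting the explicit form of $C(\alpha)$, the $\Gamma\bigl((\alpha-n+1)/2\bigr)$ factor cancels, and routine bookkeeping of the powers of $2$, $\pi$, and $t$ collects everything into
\begin{equation*}
\e_t(x,y)\,\frac{t^{(\alpha-2)/2}}{\Gamma(\alpha/2)}.
\end{equation*}
Specializing $\alpha = 2+2j$ gives $t^j/\Gamma(j+1) = t^j/j!$, which is exactly the claim.

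To remove the temporary assumption $\mathrm{Re}(\alpha)>n+1$, I would observe that both sides are holomorphic in $\alpha\in\C$. The right-hand side is entire because $1/\Gamma$ is. For the left-hand side, the family $\alpha\mapsto R(\alpha;\cdot,x,y)$ is by construction a holomorphic family of distributions (cf.\ \cite[Lemma~1.2.2]{bgp}), and for $d>0$ the distribution is supported in $\{|s|\geq d\}$, so pairing it against the fixed Schwartz function $s\mapsto\gamma_t(s)\,s$ depends holomorphically on $\alpha$. Two holomorphic functions that agree on $\{\mathrm{Re}(\alpha)>n+1\}$ agree everywhere, so the identity extends to all $\alpha=2+2j$. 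The case $d=0$ (i.e.\ $x=y$) follows either by continuity of both sides in $(x,y)$ or by the same direct calculation with $d=0$; the smoothness of the result on $M\bowtie M$ is immediate from the explicit formula since $d(x,y)^2$ is smooth there.

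The only slightly delicate point is the holomorphy of the left-hand side in $\alpha$, since $\gamma_t(s)\,s$ is Schwartz but not compactly supported. This is essentially the defining property of the Riesz family together with its tempered character, so no real work is required; the main body of the argument is the explicit integration in the first two steps, which is a purely mechanical Gaussian calculation.
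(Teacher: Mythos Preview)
Your proof is correct and follows essentially the same route as the paper: reduce to $s>0$ by parity, compute the integral explicitly for $\mathrm{Re}(\alpha)>n+1$ via a substitution that turns it into a Gamma integral, obtain the closed form $\e_t(x,y)\,t^{(\alpha-2)/2}/\Gamma(\alpha/2)$, and then extend to all $\alpha$ by analytic continuation. The only cosmetic differences are that you substitute $u=s^2-d^2$ in one step where the paper uses $u^2=s^2-d^2$ followed by $r=u^2/4t$, and that for the continuation you argue pointwise in $(x,y)$ (invoking the tempered nature of $R(\alpha;\cdot,x,y)$ to pair with the Schwartz function $s\mapsto \gamma_t(s)s$) whereas the paper pairs with a test function $\varphi\in\mathscr{D}(M\bowtie M)$; both are legitimate.
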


\begin{proof}
For $\mathrm{Re}(\alpha)>n+1$, consider the absolutely convergent integral
\begin{equation*}
\begin{aligned}
  \frac{1}{2t} \int_{-\infty}^\infty \gamma_t(s) R(\alpha; s, x, y) \,s\,\dd s 
  &= \frac{C(\alpha)}{2t}\int_{-\infty}^\infty \gamma_t(s) \bigl(s^2 - d(x, y)^2\bigr)_+^{\frac{\alpha-n-1}{2}} |s| \,\dd s\\
  &= \frac{C(\alpha)}{t}\int_{0}^\infty \gamma_t(s) \bigl(s^2 - d(x, y)^2\bigr)_+^{\frac{\alpha-n-1}{2}} s \,\dd s\\
  &= \frac{C(\alpha)}{t}\int_{d(x, y)}^\infty \gamma_t(s) \bigl(s^2 - d(x, y)^2\bigr)^{\frac{\alpha-n-1}{2}} s \,\dd s
\end{aligned}
\end{equation*}
Performing the substitution $u^2 = s^2-d(x, y)^2$ which transforms the interval $(d(x, y), \infty)$ into the interval $(0, \infty)$, we have $s \dd s = u \dd u$. Therefore, we obtain
\begin{align*}
  \int_{d(x, y)}^\infty \gamma_t(s) \bigl(s^2 - d(x, y)^2\bigr)^{\frac{\alpha-n-1}{2}} s \,\dd s 
  &= \gamma_t\bigl(d(x, y)\bigr) \int_{0}^\infty e^{-u^2/4t} u^{\alpha-n} \,\dd u.
\end{align*}
Now, substituting $u^2/4t = r$, the integral can be brought into the form of a gamma-integral, giving
\begin{equation*}
  \int_{0}^\infty e^{-u^2/4t} u^{\alpha-n} \,\dd u = t^{1/2}(4t)^{\frac{\alpha-n}{2}} \int_0^\infty e^{-r} r^{\frac{\alpha-n-1}{2}} \dd r =  t^{1/2}(4t)^{\frac{\alpha-n}{2}} \Gamma\left( \frac{\alpha-n+1}{2}\right).
\end{equation*}
Put together, we arrive at
\begin{equation}\begin{aligned} \label{RieszDistributionIntegral}
 \frac{1}{2t} \int_{-\infty}^\infty\gamma_t(s) R(\alpha; s, x, y) \,s\,\dd s 
 &= \gamma_t\bigl(d(x, y)\bigr) \frac{C(\alpha)}{t} t^{1/2}(4t)^{\frac{\alpha-n}{2}}\Gamma\left( \frac{\alpha-n+1}{2}\right)\\
 &= \e_t(x, y)\frac{t^{\frac{\alpha-2}{2}}}{\Gamma(\alpha/2)}.
\end{aligned}
\end{equation}
Until now, we have restricted ourselves to the case $\mathrm{Re}\,\alpha>n+1$. However, for both sides of the last equation, if we pair them with a test function $\varphi \in \mathscr{D}(M \bowtie M)$, the result will be an entire holomorphic function in $\alpha$. Because they coincide for $\mathrm{Re}\,\alpha > n+1$, they must coincide everywhere, by the identity theorem for holomorphic functions.

The statement of the lemma is the particular result for $\alpha = 2 + 2j$, $j \in \N_0$.
\end{proof}

\begin{proof}[of Thm.\ \ref{ThmStrongEstimates2}]
Integrating by parts in \eqref{TransmutationFormula}, which is justified by the estimate \eqref{NormBoundG}, we obtain
\begin{equation}\begin{aligned} \label{TransmutationKernels}
  p_t^L(x, y) &= \int_{-\infty}^\infty \gamma_t(s) \,G^\prime(s, x, y) \,\dd s \\
  &= \int_{-\infty}^\infty \gamma_t(s) \, G(s, x, y) \,\frac{s}{2t}\,\dd s
\end{aligned}\end{equation}
where the identity is to be interpreted in the distributional sense. Now for any $\nu \in \N$, we have
\begin{equation*}
  p_t^L(x, y) = \sum_{j=0}^\nu \frac{\Phi_j(x, y)}{2t } \int_{-\infty}^\infty \gamma_t(s) R(2+2j; s, x, y)\,s\, \dd s
  + \frac{1}{2t} \int_{-\infty}^\infty \gamma_t(s) \delta^\nu(s, x, y) \,s\,\dd s,
\end{equation*}
where $\delta^\nu(t, x, y)$ is in $C^k$ whenever $\nu \geq (n+1)/2 + k$. By Lemma~\ref{LemmaRieszIntegral}, the first term evaluates  to
\begin{equation*}
  \sum_{j=0}^\nu \frac{\Phi_j(x, y)}{2t } \int_{-\infty}^\infty \gamma_t(s) R(2+2j; s, x, y)\,s\, \dd s
  = \e_t(x, y) \sum_{j=0}^\nu t^j \frac{\Phi_j(x, y)}{j!}.
\end{equation*}
It remains to estimate the error term. Because $G_t = - G_{-t}$ and the Riesz distributions are odd in $t$, the remainder term $\delta^\nu(t, x, y)$ is an odd function in the $t$ variable. We conclude
\begin{equation*}
  r^\nu(t, x, y) := \frac{1}{2t} \int_{-\infty}^\infty \gamma_t(s) \delta^\nu(s, x, y)\,s\, \dd s = \frac{1}{t} \int_{d(x, y)}^\infty \gamma_t(s) \delta^\nu(s, x, y) \, s\,\dd s,
\end{equation*}
as $\delta^\nu(s, x, y) = 0$ if $s < d(x, y)$, because of \eqref{ErrorEstimateWaveApprox}.
Substituting $s = \sqrt{u^2 + d(x, y)^2}$ as before, one obtains
\begin{equation*}
  r^\nu(t, x, y) = \frac{\gamma_t\bigl(d(x, y)\bigr)}{t} \int_{0}^\infty e^{-\frac{u^2}{4t}} \delta^\nu(\sqrt{u^2 + d(x, y)^2}, x, y) \,u\,\dd u.
\end{equation*}
Setting $\tilde{\delta}^\nu(u, x, y) := \delta_\nu(\sqrt{u^2 + d(x, y)^2}, x, y)$ one has that $\tilde{\delta}^\nu$ is $C^k$ whenever $\delta^\nu$ is $C^k$, and from \eqref{ErrorEstimateWaveApprox} follows the estimate
\begin{equation} \label{RemainderEstimateSubstituted}
  \Bigl|\frac{\partial^i}{\partial u^i} \nabla^l_x \nabla^m_y \tilde{\delta}^\nu(u, x, y) \Bigr| \leq C u^{k-i-l-m}.
\end{equation}
which is valid whenever $k\geq i+l+m$ and uniform over $x, y$ in compact subsets of $M \bowtie M$ and $u\leq T$. Now the function $e^{-u^2/4t}$ satisfies
\begin{equation*}
  -\frac{2t}{u}\frac{\partial}{\partial u} e^{-u^2/4t} = e^{-u^2/4t},
\end{equation*}
hence for any $r, l, m \in \N_0$, one obtains
\begin{equation*}
  \nabla^l_x \nabla^m_y\Bigl\{\frac{r^\nu(t, x, y)}{\e_t(x, y)}\Bigr\} = \frac{(2t)^r}{t } \int_{0}^\infty \gamma_t(u) \frac{\partial}{\partial u} \Bigl(\frac{1}{u} \frac{\partial}{\partial u} \Bigr)^{r-1} \nabla^l_x \nabla^m_y\tilde{\delta}^\nu(u, x, y) \dd u.
\end{equation*}
if $\nu$ is large enough, depending on $l$ and $m$.
The estimate \eqref{RemainderEstimateSubstituted} shows that these manipulations make sense when $\nu$ is large enough, i.e.\ in this case, the integral is absolutely convergent and uniformly bounded independent of $t$. Therefore, for any $\nu$, one can find $\tilde{\nu} \geq \nu$ large enough so that
\begin{equation*}
  \left|\frac{\partial^i}{\partial t^i}\nabla^l_x \nabla^m_y\left\{\frac{p_t^L(x, y)}{\e_t(x, y)} - \sum_{j=0}^{\tilde{\nu}} t^j \frac{\Phi_j(x, y)}{j!}\right\}\right|
  \leq C t^{\nu+1 - i},
\end{equation*}
where the estimate is uniform for $(x, y)$ in compact subsets of $M \bowtie M$ and $t \leq T$. However, the calculation
\begin{align*}
  &\left|\frac{\partial^i}{\partial t^i}\nabla^l_x \nabla^m_y\left\{\frac{p_t(x, y)}{\e_t(x, y)} - \sum_{j=0}^{\nu} t^j \frac{\Phi_j(x, y)}{j!}\right\}\right|\\
  &~~~~\leq \left|\frac{\partial^i}{\partial t^i}\nabla^l_x \nabla^m_y\left\{\frac{p_t(x, y)}{\e_t(x, y)} - \sum_{j=0}^{\tilde{\nu}} t^j \frac{\Phi_j(x, y)}{j!}\right\}\right|+ \left|\frac{\partial^i}{\partial t^i}\sum_{j={\nu}+i}^\nu t^j \frac{\nabla^l_x \nabla^m_y\Phi_j(x, y)}{j!}\right| \leq C^\prime t^{\nu+1-i}
\end{align*}
shows that in fact $\tilde{\nu}= \nu$ suffices.
\end{proof}

\begin{corollary} \label{CorollarySymmetry}
  If the Laplace-type operator $L$ is formally self-adjoint, then the heat kernel coefficients satisfy the symmetry relation
  \begin{equation*}
    \Phi_j(x, y) = \bigl(\Phi_j^*(y, x)\bigr)^*,
  \end{equation*}
  where $\Phi_j^*$ are the heat kernel coefficients for the heat kernel $p_t^*$ of ${L^*}$, the formally adjoint operator and $(\Phi_t^*(y, x))^*$ denotes the fiberwise metric adjoint of $\Phi_t^*(y, x)$. 
\end{corollary}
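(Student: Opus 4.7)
The plan is to combine the Hermitian symmetry of the heat kernel of a self-adjoint operator with the strong asymptotic expansion of Thm.~\ref{ThmStrongEstimates2} and then read off the symmetry of the coefficients by uniqueness of the expansion.

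First I would invoke Thm.~\ref{ThmSelfadjointCase}: since $L$ is formally self-adjoint and the hypotheses of Thm.~\ref{ThmStrongEstimates2} are in force, the closed extension generating the heat semigroup is in fact self-adjoint, so $e^{-tL}$ is a self-adjoint bounded operator on $L^2(M, \V)$. The usual identity between the Schwartz kernel of an operator and that of its adjoint, applied fibrewise in $\V \boxtimes \V^*$, together with the smoothness of $p_t^L$ (parabolic regularity), then yields the pointwise relation
\[
  p_t^L(x, y) = \bigl(p_t^L(y, x)\bigr)^*
\]
for all $t > 0$ and all $(x, y) \in M \times M$.

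Next I would apply Thm.~\ref{ThmStrongEstimates2} to both sides. Because $\e_t(x, y)$ depends only on $d(x, y)$ it is symmetric, so dividing through by $\e_t(x, y)$ and using that the fibrewise adjoint is an isometry (hence preserves the size of the $O(t^{\nu+1})$ error and commutes with finite sums) gives
\[
  \sum_{j=0}^{\nu} t^j \frac{\Phi_j(x, y)}{j!} = \sum_{j=0}^{\nu} t^j \frac{\bigl(\Phi_j(y, x)\bigr)^*}{j!} + O(t^{\nu+1})
\]
uniformly on compact subsets of $M \bowtie M$ for $t \leq T$. The standard extraction argument---subtract the leading term, divide by $t$, send $t \downarrow 0$, iterate---then forces $\Phi_j(x, y) = \bigl(\Phi_j(y, x)\bigr)^*$ for every $j \in \N_0$. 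Since $L$ is formally self-adjoint, $L^* = L$ as differential operators on $\mathscr{D}(M, \V)$, so the Hadamard coefficients $\Phi_j^*$ of $L^*$ coincide with $\Phi_j$; this is exactly the claimed identity $\Phi_j(x, y) = \bigl(\Phi_j^*(y, x)\bigr)^*$.

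The main obstacle is the first step: promoting the $L^2$-operator identity $(e^{-tL})^* = e^{-tL}$ to a pointwise statement about the smooth Schwartz kernel. This needs the self-adjointness of the semigroup generator (ensured by Thm.~\ref{ThmSelfadjointCase}) together with parabolic regularity, and some care with the bundle conventions so that $(p_t^L(y,x))^*$ is correctly interpreted as an element of $\mathrm{Hom}(\V_y, \V_x)$. Once the fibrewise Hermitian symmetry of $p_t^L$ is in hand, matching coefficients in the uniquely determined asymptotic expansion is routine.
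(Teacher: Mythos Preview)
Your approach is essentially the same as the paper's: use the pointwise symmetry of the heat kernel (the paper simply cites Prop.~2.17(2) of \cite{bgv} for $p_t^L(x,y) = (p_t^{L^*}(y,x))^*$, while you derive it from self-adjointness of the semigroup) and then match coefficients via the strong expansion of Thm.~\ref{ThmStrongEstimates}. The paper explicitly stresses, as you implicitly use, that this matching requires the strong estimate \eqref{StrongAsymptoticsDifferential} rather than the weak one \eqref{WeakAsymptotics}, since only after dividing by $\e_t(x,y)$ does the error remain $O(t^{\nu+1})$.
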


\begin{proof}
By Thm.\ \ref{ThmStrongEstimates}, this follows from the fact that the heat kernel itself satisfies the same symmetry relation by Prop.~2.17 (2) in \cite{bgv}. Note that this argument does not work if one only knows \eqref{WeakAsymptotics}.
\end{proof}

\begin{remark}
Corollary~\ref{CorollarySymmetry} is not at all obvious from the defining transport equations for the $\Phi_j$.
  The result was previously proved in the scalar case by Moretti \cite{moretti}, \cite{moretti2} for the heat equation and the Hadamard coefficients by approximating the given metric by real analytic metrics. However, for the heat kernel coefficients, this comes out directly from Thm.\ \ref{ThmStrongEstimates}.
\end{remark}

In the remainder of this section, we demonstrate how to obtain Gaussian estmimates on $p_t^L$ using our techniques.

\begin{theorem}[Gaussian upper Bound] \label{ThmGaussianUpperBounds}
Let $L$ be a formally self-adjoint Laplace type operator acting on sections of a vector bundle $\V$ over a compact manifold $M$ and let $p_t$ be its heat kernel. Then for any $T>0$, there exists a constant $C>0$ such that
  \begin{equation*}
     \Bigl|\frac{\partial^j}{\partial t^j} \nabla_x^m \nabla_y^l p_t(x, y)\Bigr| \leq C t^{-(n+2j+m+l+1)} e^{-\frac{d(x, y)^2}{4t}}
\end{equation*}
for all $x, y \in M$, whenever $t \leq T$.  
\end{theorem}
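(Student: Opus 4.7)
The plan is to combine the strong asymptotic expansion of Thm.~\ref{ThmStrongEstimates} (applied on a large compact subset of $M \bowtie M$) with a direct analysis of the heat kernel away from the diagonal via the transmutation formula, using finite propagation speed of the wave kernel to extract the Gaussian decay factor.

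\emph{Near the diagonal.} Since $M$ is compact, $\mathrm{inj}(M) > 0$ and the set $K := \{(x,y) \in M \times M : d(x,y) \leq \mathrm{inj}(M)/2\}$ is a compact subset of $M \bowtie M$. On $K$, Thm.~\ref{ThmStrongEstimates} applied with $\nu = 0$ gives $p_t(x,y) = \e_t(x,y)\Phi_0(x,y) + t\,\e_t(x,y)\, R(t,x,y)$ with $R$ and its $(x,y,t)$-derivatives smoothly bounded on $K \times (0,T]$. Differentiating the Euclidean heat kernel $\e_t(x,y) = (4\pi t)^{-n/2} e^{-d(x,y)^2/(4t)}$ via the chain rule, each time derivative introduces a factor bounded by $C t^{-2}$ (the worst coming from $\partial_t(d^2/(4t)) \sim d^2/t^2$) and each spatial derivative a factor bounded by $C t^{-1}$ (from $\nabla d^2/(4t) \sim d/t$), with $d^2$ and its covariant derivatives smoothly bounded on $K$. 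A careful count yields $|\partial_t^j \nabla_x^m \nabla_y^l p_t(x,y)| \leq C t^{-(n/2+2j+m+l)} e^{-d(x,y)^2/(4t)}$ on $K$, which is stronger than the claim.

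\emph{Away from the diagonal.} For $d(x,y) \geq \mathrm{inj}(M)/2 > 0$, we use the integration-by-parts form of the transmutation formula,
\[
  p_t(x,y) = \int_{-\infty}^\infty \gamma_t(s)\, G(s,x,y)\, \frac{s}{2t}\,ds,
\]
interpreted distributionally. By finite propagation speed, $G(s,x,y)$ is supported in $|s| \geq d(x,y)$. Differentiating under the integral, the key ingredient is a pointwise bound $|\nabla_x^m \nabla_y^l G(s,x,y)| \leq C(1+|s|)^N e^{\alpha|s|}$. Given this, $|\partial_t^j(\gamma_t(s)\,s/(2t))|$ equals $\gamma_t(s)$ times a polynomial in $s$ and $t^{-1}$ of controlled degree, and completing the square $-s^2/(4t) + \alpha s = -(s-2\alpha t)^2/(4t) + \alpha^2 t$ yields the Gaussian tail estimate
\[
  \int_{d(x,y)}^\infty (1+s)^{N'} e^{\alpha s}\, \gamma_t(s)\,ds \leq C\, t^{1/2}\, e^{\alpha\, d(x,y)}\, e^{-d(x,y)^2/(4t)},
\]
bounded by $C' t^{1/2} e^{-d(x,y)^2/(4t)}$ uniformly for $d(x,y) \in [\mathrm{inj}(M)/2, \mathrm{diam}(M)]$ and $t \in (0,T]$. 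Accumulating the negative powers of $t$ coming from the derivatives of $\gamma_t(s)\,s/(2t)$ produces the desired bound.

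\emph{Main obstacle.} The key technical step is the uniform pointwise bound on the wave-kernel derivatives $\nabla_x^m \nabla_y^l G(s,x,y)$. Since the wave propagator $G_s$ does not smooth, the $L^2$ operator norm alone is insufficient. A sufficient approach, after a shift $L \mapsto L - \omega$ to assume $L \geq 0$, is to use the spectral representation $G_s = \sin(s\sqrt{L})/\sqrt{L}$ and bound $\|(1+L)^{-k_1} G_s (1+L)^{-k_2}\|_{L^2 \to L^2}$ by $|s|$; taking $k_1, k_2 > n/4$, Sobolev embedding converts this into a continuous Schwartz kernel bound, and applying elliptic regularity of $L$ then transfers the bound to $\nabla_x^m \nabla_y^l G(s,x,y)$ at the cost of extra polynomial growth in $s$, which is harmless for the loose target exponent $-(n+2j+m+l+1)$.
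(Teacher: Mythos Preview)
Your near-diagonal argument via Thm.~\ref{ThmStrongEstimates} is correct and in fact yields a sharper exponent than claimed. The gap is in the away-from-diagonal part: the pointwise bound
\[
\bigl|\nabla_x^m \nabla_y^l G(s,x,y)\bigr| \leq C(1+|s|)^N e^{\alpha|s|}
\]
that you call the ``key ingredient'' is simply false. For fixed $s\neq 0$ the wave kernel $G(s,\cdot,\cdot)$ is a genuinely singular distribution on $M\times M$, not a locally bounded function --- already in $\R^3$ it is a measure supported on the light cone, and in higher odd dimensions a derivative of such a measure. Your Sobolev fix does not repair this: the kernel $K_s$ of $(1+L)^{-k_1}G_s(1+L)^{-k_2}$ is indeed continuous once $k_1,k_2>n/4$, but to recover $G(s,x,y)=(1+L_x)^{k_1}(1+L_y)^{k_2}K_s(x,y)$ as a continuous function you would need $K_s\in C^{2k_1,2k_2}$, which requires strictly more smoothing than you have inserted --- an irreparable deficit, since $G_s$ itself is not continuous. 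Worse, the operators $(1+L)^{-k}$ are nonlocal, so the smoothed kernel no longer vanishes for $|s|<d(x,y)$, and with that you lose the very support property that is supposed to produce the Gaussian factor $e^{-d(x,y)^2/4t}$.

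The paper's proof sidesteps this by never seeking a pointwise bound on the wave kernel. Instead it observes that $\partial_t^j\nabla_x^m\nabla_y^l G'(s,x,y)$, restricted to fixed $(x,y)$ via a wavefront-set argument, is a distribution of order at most $k=n+1+j+m+l$ in the single variable $s$, and hence equals $\partial_s^k f(s,x,y)$ for some $L^1$ function $f$. Finite propagation speed is inherited by $f$, so $f(s,x,y)=0$ for $|s|<d(x,y)$. One then integrates by parts $k$ times in $s$ in the transmutation integral, moving all $s$-derivatives onto $\gamma_t$; each costs a factor of order $t^{-1}$ (this is the origin of the exponent $n+1+j+m+l$), while the support of $f$ yields the factor $e^{-d(x,y)^2/4t}$. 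The essential point is that the distributional singularity is absorbed in the $s$-variable, where integration by parts costs only powers of $t$ and preserves the support structure, rather than in the spatial variables, where any smoothing would destroy it.
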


\begin{remark}
In fact, the upper bound can be improved to have a pre-factor of $t^{-n+1/2}$ instead of $t^{-n-1}$ in the case $j=m=l=0$ \cite[Thm.~5.3.4]{hsu}. This result is then sharp, as seen e.g.\ by the example of two antipodal points of a sphere \cite[Example~5.3.3]{hsu}. Of course, if $(x, y) \in M \bowtie M$, then the correct exponent is $t^{-n/2}$ near $(x, y)$, by Thm.\ \ref{ThmStrongEstimates}.
\end{remark}

\begin{proof}[sketch]
We use the transmutation formula \eqref{TransmutationFormula}. Since $G^\prime_t$ is a commuting and uniformly bounded family of self-adjoint linear operators on $L^2(M, \V)$, its Schwartz kernel $G^\prime(t, x, y)$ is a distribution of order at most $n+1$ on $M\times M \times \R$. The wavefront set of $G^\prime(t, x, y)$ is contained in the characteristics of the wave operator $\partial_{tt} + L$, (i.e.\ the ``light cone'') which are transversal to the submanifolds $\R \times \{(x, y)\} \subset \R \times M \times M$. Therefore, one can restrict $G^\prime$ to these submanifolds, so that for $(x, y) \in M \times M$ fixed, $G^\prime(t, x, y)$ is a distribution on $\R$ of order at most $n+1$ in the variable $t$. Similarly, $\frac{\partial^j}{\partial t^j}\nabla_x^k \nabla_y^l G^\prime(t, x, y)$ is a distribution of order at most $k:=n+1+m+l+j$ on $\R$. This means that
\begin{equation}
  \frac{\partial^j}{\partial t^j}\nabla_x^m \nabla_y^l G^\prime(t, x, y) = \frac{\partial^{k}}{\partial t^k}f(t, x, y)
\end{equation}
in the sense of distributions for some $L^1$ function $f(s, x, y)$. Integration by parts gives
\begin{equation*}
  \nabla_x^m\nabla_y^l p_t(x, y) = (-1)^k\int_{-\infty}^\infty \frac{\partial^k}{\partial s^k} \gamma_t(s) \,f(s, x, y) \dd s,
\end{equation*}
which gives a pre-factor of order $-k$ in $t$. Here, the integration by parts is justified by standard energy estimates. Differentiating $j$ times by $t$ gives another pre-factor of order $-2j$ in $t$.

The result now follows from the fact that $G^\prime(s, x, y)$ and hence also $f(s, x, y)$ is equal to zero for $|s| < d(x, y)$, by finite propagation speed of the wave equation.
\end{proof}

\begin{theorem}[Gaussian lower Bound] \label{ThmGaussianLowerBounds}
Let $M$ be a compact Riemannian manifold and let $L$ be a scalar Laplace type operator $L$, i.e.\ a Laplace type operator acting on functions on $M$. Then for any $T>0$, there exists a constant $C>0$ such that
  \begin{equation*}
    \e_t(x, y) \leq C p_t^L(x, y)
\end{equation*}
 for all $x, y \in M$, whenever $t \leq T$.  
\end{theorem}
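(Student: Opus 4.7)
My plan is to combine the near-diagonal strong asymptotics of Thm.~\ref{ThmStrongEstimates2} with a Chapman--Kolmogorov chaining argument, using the strict positivity of the scalar heat kernel to propagate the lower bound to all of $M \times M$. In the scalar case, a standard computation from the Hadamard transport equations gives $\Phi_0(x,x) = 1$, so by continuity there exists $\delta \in (0, \mathrm{inj}(M))$ with $\Phi_0(x,y) \geq \tfrac{1}{2}$ whenever $d(x,y) \leq \delta$. Applying Thm.~\ref{ThmStrongEstimates2} (with $\nu = k = l = m = 0$) to the compact set $K = \{(x,y) : d(x,y) \leq \delta\} \subset M \bowtie M$ then produces $t_0 > 0$ and $c_0 > 0$ such that
\begin{equation*}
 p_t^L(x,y) \geq c_0 \, \e_t(x,y) \qquad \text{whenever } d(x,y) \leq \delta \text{ and } 0 < t \leq t_0.
\end{equation*}

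Next, fix $N \in \N$ large enough that $\mathrm{diam}(M)/N \leq \delta/2$, and for arbitrary $(x,y) \in M \times M$ pick equally spaced points $x = x_0, x_1, \dots, x_N = y$ along a minimizing geodesic. By strict positivity of the scalar heat kernel (a consequence of the parabolic maximum principle, or of the Feynman--Kac representation), the semigroup identity $p_t^L = (p_{t/N}^L)^{*N}$ yields
\begin{equation*}
 p_t^L(x,y) \geq \int_{\prod_{i=1}^{N-1} B_r(x_i)} \prod_{i=1}^N p_{t/N}^L(z_{i-1}, z_i)\, d\vol(z_1) \cdots d\vol(z_{N-1})
\end{equation*}
for any $r \in (0, \delta/4]$, with $z_0 := x$ and $z_N := y$. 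For $t \leq N t_0$ the triangle inequality gives $d(z_{i-1}, z_i) \leq \delta$ and $t/N \leq t_0$, so the near-diagonal bound from the first step applies to each factor, and the problem reduces to lower-bounding the integral of $\prod_i \e_{t/N}(z_{i-1}, z_i)$. In exponential coordinates $z_i = \exp_{x_i}(v_i)$ a Taylor expansion of the distance function gives
\[ d(z_{i-1}, z_i)^2 = \frac{d(x,y)^2}{N^2} + 2\frac{d(x,y)}{N}(v_i - v_{i-1}) \cdot \widehat\tau_i + |v_i - v_{i-1}|^2 + O(|v|^3), \]
and the linear terms telescope to zero under $v_0 = v_N = 0$. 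Choosing $r = \sqrt{t}$ (valid once $t \leq (\delta/4)^2$) so that the balls capture a definite fraction of the Gaussian mass, a direct Gaussian integration yields $\int \prod_i \e_{t/N}(z_{i-1}, z_i) \geq C_N \e_t(x,y)$, which is the manifold analogue of the exact Euclidean identity $\e_{t_1} * \cdots * \e_{t_N} = \e_{t_1 + \cdots + t_N}$. This gives $p_t^L(x,y) \geq C\, \e_t(x,y)$ for $0 < t \leq t_1 := \min(Nt_0, (\delta/4)^2)$.

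For the remaining range $t_1 \leq t \leq T$, strict positivity of $p_t^L$ on the compact set $M \times M \times [t_1, T]$ gives $p_t^L \geq c_1 > 0$ uniformly, while $\e_t \leq (4\pi t_1)^{-n/2}$ everywhere, yielding the claim. The main obstacle is the Gaussian integration in the chaining step: the product of $N$ Euclidean heat kernels at time $t/N$, integrated over small balls around the intermediate geodesic points, must reproduce $\e_t(x,y)$ up to a constant, which on a curved manifold requires controlling the curvature-induced corrections to the geodesic distance function as subleading error terms.
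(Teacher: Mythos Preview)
Your argument is correct and follows the classical Aronson chaining scheme, which is a genuinely different route from the paper's. The paper also starts from the strong near-diagonal asymptotics, but instead of convolving $p^L$ with itself and restricting the integration to small balls along a minimizing geodesic, it reverses the inequality: from Thm.~\ref{ThmStrongEstimates} (and $\Phi_0>0$ in the scalar case) one obtains $\e_{t/N}^\nu \leq C\,p_{t/N}^L$ pointwise on the support of $\e^\nu$, hence $(\e_{t/N}^\nu)^{*N} \leq C^N p_t^L$ by iterated convolution. The convolution $(\e_{t/N}^\nu)^{*N}$ is then rewritten, via Lemma~\ref{LemmaPathIntegral}, as an integral over the finite-dimensional manifold $H_{xy;\tau}(M)$ of piecewise geodesics, and Laplace's method (Appendix~\ref{AppendixLaplace}) extracts the leading behavior, yielding $(\e_{t/N}^\nu)^{*N}(x,y) \geq \varepsilon\,\e_t(x,y)$ uniformly.

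Your version is more elementary and self-contained: it needs only the strong asymptotics, the semigroup identity, strict positivity of the scalar kernel, and a localized Gaussian computation. The paper's version forward-references the path-integral formalism and the Laplace expansion developed later in the paper, but in exchange it handles the asymptotics of the convolution integral systematically rather than through an ad hoc Taylor expansion of $d(z_{i-1},z_i)^2$. The step you correctly flag as the main obstacle---making the linear terms genuinely telescope after identifying the tangent spaces $T_{x_i}M$ by parallel transport, and controlling the $O(|v|^3)$ curvature corrections---is precisely what the paper's approach absorbs into the non-degeneracy of the Hessian $\nabla^2 E$ at the minimizing geodesic.
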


\begin{proof}[sketch]
For some $\nu \geq 1$, let $\e_t^\nu(x, y)$ be the approximate heat kernel of $L$ defined in \eqref{ApproximateHeatKernel}.
For $N$ large and some $\nu \geq 1$, set $\tau := \{ 0 < 1/N < 2/N < \dots < (N-1)/N < 1 \}$ for the equidistant partition of the interval $[0, 1]$ with $N+1$ nodes. Because of Thm.~\ref{ThmStrongEstimates}, we have
\begin{equation*}
  \bigl(\underbrace{\e_{t/N}^\nu* \cdots * e_{t/N}^\nu}_{N~\text{times}}\bigr)(x, y) \leq \bigl(C p_{t/N}^L * \cdots * Cp_{t/N}\bigr)(x, y) = C^N p_t^L(x, y)
\end{equation*}
for some $C>0$.
On the other hand, by Lemma~\ref{LemmaPathIntegral} below, the convolution product $\e_{t/N}^\nu* \cdots * e_{t/N}^\nu$ can be written as an integral over the manifold $H_{xy;\tau}(M)$ of piecewise geodesics (introduced in Section~\ref{SectionAsymptoticsFar}),
\begin{equation*}
   \bigl(\e_{t/N}* \cdots * e_{t/N}^\nu\bigr)(x, y) = (4 \pi t)^{-nN/2} \int_{H_{xy;\tau}(M)} e^{-E(\gamma)/2t} \Upsilon_{\tau, \nu}(t, \gamma) \dd \gamma,
\end{equation*}
where $E$ is the energy functional \eqref{EnergyFunctional} and $\Upsilon_{\tau, \nu}(t, \gamma)$ is some smooth function, depending polynomially on $t$. An investigation of the integral using Laplace's method (see Appendix~\ref{AppendixLaplace}) shows that 
\begin{equation*}
  \frac{ \bigl(\e_{t/N}* \cdots * e_{t/N}^\nu\bigr)(x, y)}{\e_t(x, y)} \geq \varepsilon,
\end{equation*}
where $\varepsilon>0$ is independent of $x$ and $y$, where one uses that $\Upsilon_{\tau, \nu}(0, \gamma)>0$ for all minimal geodesics $\gamma$ connecting $x$ and $y$, if $N$ is large enough.
\end{proof}

\begin{remark}
There is a rich literature containing Gaussian bounds for the Laplace-Beltrami operator. In the stochastic literature, two-sided estimates can be found e.g.\ in \cite{molchanov}, \cite[Thm.~5.3.4]{hsu} and \cite{BarilariBoscainNeel}. Using analytic methods, the Gaussian estimate from above is derived e.g.\ in \cite{daviespang}, \cite[Thm.~15.14]{grigoryan} and \cite{coulhonsikora}. 
\end{remark}

\section{Convolution Approximation}

In this section, we prove Thm.~\ref{ThmConvolutionApproximation}. Throughout, $M$ is a compact Riemannian manifold and $L$ is a Laplace type operator, acting on sections of a metric vector bundle $\V$ over $M$.

The proof  relies the following lemma.

\begin{lemma} \label{LemmaLargeOscillations}
For any $0 < \varepsilon < 1$ and all $R, T>0$, there exist constants $C, \delta >0$ such that for all $x, y \in M$, we have
\begin{equation*}
 \int_{d(z_0, z_1) \geq R} p_{s_0}^\Delta(x, z_0) p_{s_1-s_0}^\Delta(z_0, z_1) p_{t-s_1}^\Delta(z_1, y)\, \dd(z_0, z_1) < C e^{-(1-\varepsilon)\frac{R^2}{4(s_1-s_0)}}p_t^\Delta(x, y)
\end{equation*}
whenever $0 \leq s_0 < s_1 \leq t\leq T$ and $s_1-s_0 \leq t \delta$. Here $p_t^\Delta$ denotes the heat kernel of the Laplace-Beltrami operator on $M$.
\end{lemma}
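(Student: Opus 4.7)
My plan is to reduce the integral to a single three-fold Chapman--Kolmogorov convolution by bounding the middle factor $p_{s_1-s_0}^\Delta(z_0,z_1)$ pointwise on $\{d(z_0,z_1)\geq R\}$ by a time-dilated heat kernel times the exponential penalty $e^{-(1-\varepsilon')R^2/4(s_1-s_0)}$ (with $\varepsilon'\in(0,\varepsilon)$ to be chosen), and then comparing the resulting convolution to $p_t^\Delta(x,y)$ via the two-sided Gaussian estimates established in the previous section.

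First, I would apply the Gaussian upper bound (Thm~\ref{ThmGaussianUpperBounds}) to $p_{s_1-s_0}^\Delta(z_0,z_1)$ and split the Gaussian exponent as
$$
e^{-d^2/4(s_1-s_0)} \;=\; e^{-(1-\varepsilon')d^2/4(s_1-s_0)}\cdot e^{-\varepsilon' d^2/4(s_1-s_0)}.
$$
On $\{d(z_0,z_1)\geq R\}$ the first factor is at most $e^{-(1-\varepsilon')R^2/4(s_1-s_0)}$, while the second equals $(4\pi(s_1-s_0)/\varepsilon')^{n/2}\,\e_{(s_1-s_0)/\varepsilon'}(z_0,z_1)$; the Gaussian lower bound (Thm~\ref{ThmGaussianLowerBounds}), applicable on the bounded time interval $(0,T/\varepsilon']$, then lets me replace the Euclidean heat kernel by $p_{(s_1-s_0)/\varepsilon'}^\Delta(z_0,z_1)$ up to a constant. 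Substituting this estimate and applying the semigroup property to the resulting three true heat kernels collapses the $z_0,z_1$-integration, yielding
$$
C_1\,e^{-(1-\varepsilon')R^2/4(s_1-s_0)}\, p_{T_*}^\Delta(x,y),\qquad T_*:= t + (s_1-s_0)(1/\varepsilon'-1),
$$
where $C_1$ absorbs the polynomial $(s_1-s_0)$-factors produced by the Gaussian bounds.

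The hard part will be comparing $p_{T_*}^\Delta(x,y)$ with $p_t^\Delta(x,y)$, whose ratio can blow up as $t\to 0$ when $x$ and $y$ lie in each other's cut locus; the two-sided Gaussian estimates only give
$$
\frac{p_{T_*}^\Delta(x,y)}{p_t^\Delta(x,y)} \;\leq\; C_2\,\exp\!\Bigl(\tfrac{d(x,y)^2(T_*-t)}{4tT_*}\Bigr).
$$
To absorb this blowup I would use the mesh condition $s_1-s_0\leq t\delta$ twice: together with $T_*\geq t$ it gives
$$
\frac{T_*-t}{tT_*}\;\leq\;\frac{(1-\varepsilon')\,\delta^2}{\varepsilon'(s_1-s_0)}
$$
(first $(s_1-s_0)/t^2\leq \delta/t$, then $1/t\leq \delta/(s_1-s_0)$), so with $D:=\mathrm{diam}(M)$ the ratio blowup is dominated by $\exp\bigl(D^2(1-\varepsilon')\delta^2/(4\varepsilon'(s_1-s_0))\bigr)$. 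Combined with the extracted penalty, the net exponent becomes
$$
-\,\frac{(1-\varepsilon')\bigl(R^2-D^2\delta^2/\varepsilon'\bigr)}{4(s_1-s_0)},
$$
and fixing $\varepsilon'=\varepsilon/2$ together with $\delta^2\leq R^2\varepsilon'(\varepsilon-\varepsilon')/\bigl(D^2(1-\varepsilon')\bigr)$ ensures $(1-\varepsilon')(R^2-D^2\delta^2/\varepsilon')\geq (1-\varepsilon)R^2$, producing the stated bound. Thus the whole argument hinges on the interplay between the distance threshold $R$ and the smallness of the mesh $\delta$: the former contributes a negative exponent that must dominate the positive exponent generated by the time-rescaling needed to trade $\e$ for $p^\Delta$.
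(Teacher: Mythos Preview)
Your argument is essentially correct and genuinely different from the paper's. The paper takes a shorter route: after applying the Gaussian upper bound to the middle factor $p_{s_1-s_0}^\Delta(z_0,z_1)$, it does \emph{not} convert the remaining Gaussian back into a heat kernel. Instead, it simply extracts the full exponent $e^{-d(z_0,z_1)^2/4(s_1-s_0)}$, compares it directly to $e^{-d(x,y)^2/4t}$ (this is where the single use of $s_1-s_0\leq t\delta$ enters, giving $\delta$ of order $R^2/D^2$), and then disposes of the two outer factors using only that $p_s^\Delta(x,\cdot)$ integrates to $1$. The Gaussian lower bound is invoked exactly once, to trade the residual $t^{-n/2}e^{-d(x,y)^2/4t}$ for $p_t^\Delta(x,y)$. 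Your approach instead rebuilds a heat kernel at the dilated time $(s_1-s_0)/\varepsilon'$, collapses via Chapman--Kolmogorov to $p_{T_*}^\Delta(x,y)$, and then has to compare heat kernels at two different times---which forces a second application of the mesh condition and yields a smaller $\delta$ (of order $R/D$ rather than $R^2/D^2$). Both work; the paper's version is more economical and avoids the time-comparison step entirely.

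One small correction: the polynomial prefactors coming from the Gaussian bounds are negative powers of $s_1-s_0$ (of the form $(s_1-s_0)^{-n/2-1}$ after your reconstruction step, and similarly a negative power of $t$ in the ratio $p_{T_*}^\Delta/p_t^\Delta$), so they cannot be absorbed into constants $C_1$, $C_2$ as you write. They must instead be absorbed into the exponential penalty by giving up a further sliver of $\varepsilon'$---exactly as the paper does in its last line when passing from $\varepsilon'$ to $\varepsilon$. This is a routine fix, but the wording in your proposal should be adjusted.
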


\begin{proof} 
Set
\begin{equation*}
  I := \frac{1}{p_t^\Delta(x, y)} \int_{d(z_0, z_1) \geq R} p_{s_0}^\Delta(x, z_0) p_{s_1-s_0}^\Delta(z_0, z_1) p_{t-s_1}^\Delta(z_1, y)\, \dd(z_0, z_1)
\end{equation*}
and put
\begin{equation*}
  \varphi(r) = \begin{cases} 0 & r < R\\ 1 &r \geq R. \end{cases}
\end{equation*}
By  Thm.~\ref{ThmGaussianUpperBounds} and Thm.~\ref{ThmGaussianLowerBounds}, there exist constants $C_1, C_2>0$ such that for all $0<t\leq T$ and all $x, y \in M$, we have
\begin{equation} \label{GaussianBoundsInLemma}
  C_1 t^{-n/2} e^{-\frac{d(x,y)^2}{4t}} \leq p^\Delta_{t}(x, y) \leq C_2 t^{-n-1} e^{-\frac{d(x,y)^2}{4t}}.
\end{equation}
Using this, we obtain
\begin{align*}
 I \leq \frac{C_2(s_1-s_0)^{-n-1} }{p^\Delta_t(x, y)} \int_M\int_Me^{- \frac{d(z_0, z_1)^2}{4(s_1-s_0)}} p^\Delta_{s_0}(x, z_0)p^\Delta_{t-s_1}(z_1, y) \varphi\bigl(d(z_0, z_1)\bigr) \dd z_0\dd z_1.
\end{align*}
Now set for any $\varepsilon^\prime$ with $0<\varepsilon^\prime < \varepsilon$
\begin{equation} \label{ChoiceOfDelta}
  \delta := \varepsilon^\prime \frac{R^2}{\mathrm{diam}(M)^2}.
\end{equation}
Then on the set where $\varphi(d(z_0, z_1))\neq 0$, i.e.\ $d(z_0, z_1)\geq R$, we have whenever $s_1-s_0 \leq t\delta$ the estimate
  \begin{equation*}
  \begin{aligned}
    \frac{d(z_0, z_1)^2}{4(s_1-s_0)} - \frac{d(x,y)^2}{4t} 
    &\geq \frac{R^2}{4(s_1-s_0)} - \frac{d(x, y)^2\delta}{4(s_1-s_0)}
    = \frac{R^2}{4(s_1-s_0)} - \varepsilon^\prime\frac{R^2d(x, y)^2}{4(s_1-s_0)\mathrm{diam}(M)^2}\\
    &\geq \bigl(1-\varepsilon^\prime\bigr)\frac{R^2}{4(s_1-s_0)}.
  \end{aligned}
  \end{equation*}
  Hence under this restriction on $s_1-s_0$ and using that the function $p_t^\Delta(x, -)$ integrates to one for each $x \in M$, as well as \eqref{GaussianBoundsInLemma}, we have for each $0<t \leq T$ that
  \begin{align*}
 I &\leq \frac{C_2(s_1-s_0)^{-n-1}}{p_t^\Delta(x, y)} e^{ -(1-\varepsilon^\prime)\frac{R^2}{4(s_1-s_0)}-\frac{d(x, y)^2}{4t}} \int_M \int_M p^\Delta_{s_0}(x, z_0)p^\Delta_{t-s_1}(z_1, y) \dd z_0 \dd z_1 \\
 &\leq  C_2(s_1-s_0)^{-n-1} e^{-(1-\varepsilon^\prime)\frac{R^2}{4(s_1-s_0)}} T^{n/2} \frac{t^{-n/2} e^{-\frac{d(x, y)^2}{4t}}}{p_t^\Delta(x, y)} \\
&\leq {C_3} (s_1-s_0)^{-n-1} e^{-(1-\varepsilon^\prime)\frac{R^2}{4(s_1-s_0)}}  < C_4  e^{-(1-\varepsilon)\frac{R^2}{4(s_1-s_0)}},
  \end{align*}
  if the constants $C_3$, $C_4$ are chosen appropriately.
\end{proof}

\begin{remark}
 The proof above shows that one can choose $\delta$ as in Thm.~\ref{ThmConvolutionApproximation} in order that the statement of Lemma~\ref{LemmaLargeOscillations} holds. 
\end{remark}

We can now prove Thm.~\ref{ThmConvolutionApproximation}.

\begin{proof}[of Thm.~\ref{ThmConvolutionApproximation}]
Throughout the proof, write $\Delta_j := \Delta_j \tau$ for abbreviation. By the Markhov property of the heat kernel, we have $p_t^L = p_s^L * p^L_{t-s}$ for all $0 < s < t$. We obtain that
\begin{equation*}
  p_t^L - \e_{\Delta_1}^\nu * \cdots * \e_{\Delta_N}^\nu = \sum_{j=1}^N p_{\tau_{j-1}}^L *\cdots * p_{\Delta_{j-1}}^L * \bigl( p_{\Delta_j}^L - \e_{\Delta_j}^\nu\bigr)* \e_{\Delta_{j+1}}^\nu * \cdots * \e_{\Delta_N}^\nu,
\end{equation*}
since the sum on the right hand side telescopes.
By the Hess-Schrader-Uhlenbrock estimate \cite{HessSchraderUhlenbrock}, we have $|p_t^L| \leq e^{\alpha t} p_t^\Delta$ for some constant $\alpha \in \R$, where $p_t^\Delta$ denotes the heat kernel of the Laplace-Beltrami operator (here we use self-adjointness of the operator $L$). Similarly,
\begin{equation*}
\begin{aligned}
  \bigl|\e_t^\nu(x, y) \bigr| &\leq \bigl|p_t^L(x, y)\bigr| + \bigl| e_t^\nu(x, y) - \chi\bigl(d(x, y)\bigr) p_t^L(x, y)\bigr | \leq e^{\alpha t} p_t^\Delta(x, y) + C t^{\nu + 1} \e_t(x, y)\\
  &\leq e^{\alpha^\prime t} p_t^{\Delta}(x, y)
\end{aligned}
\end{equation*}
for some $\alpha^\prime > 0$, where we used Thm.~\ref{ThmStrongEstimates} and the Gaussian estimate from below, Thm.~\ref{ThmGaussianLowerBounds}. Therefore,
\begin{equation*}
\begin{aligned}
  \bigl|p_t^L - \e_{\Delta_1}^\nu * \cdots * \e_{\Delta_N}^\nu\bigr| &\leq \sum_{j=1}^N |p_{\tau_{j-1}}^L| * \bigl| p_{\Delta_j}^L - \e_{\Delta_j}^\nu\bigr|* |\e_{\Delta_{j+1}}^\nu| * \cdots * |\e_{\Delta_N}^\nu|\\
  &\leq  \sum_{j=1}^N e^{\tau_{j-1}\alpha }p_{\tau_{j-1}}^\Delta * \bigl| p_{\Delta_j}^L - \e_{\Delta_j}^\nu\bigr|*e^{\alpha \Delta_{j+1}}p_{\Delta_{j+1}}^\Delta * \cdots * e^{\alpha \Delta_N}p_{\Delta_N}^\Delta\\
  &\leq e^{\alpha t} \sum_{j=1}^N p_{\tau_{j-1}}^\Delta * \bigl| p_{\Delta_j}^L - \e_{\Delta_j}^\nu\bigr| * p_{t-\tau_j}^\Delta.
\end{aligned}
\end{equation*}
Now, by Thm.~\ref{ThmStrongEstimates} and the Gaussian estimate from below,
\begin{equation*}
\begin{aligned}
\bigl| p_{t}^L(x, y) &- \e_{t}^\nu(x, y)\bigr| \\
 &\leq \left| \Bigl(1 - \chi\bigl(d(x, y)\bigr)\Bigr) p_t^L(x, y)\right| + \left| \chi\bigl(d(x, y)\bigr) \left(p_t^L(x, y) - \e_t(x, y)\sum_{j=0}^\nu t^j \frac{\Phi_j(x, y)}{j!}\right)\right|\\
&\leq e^{t\alpha}\Bigl(1 - \chi\bigl(d(x, y)\bigr)\Bigr) p_t^\Delta(x, y) +  C_1 t^{\nu+1} p_t^\Delta(x, y) 
\end{aligned}
\end{equation*}
Therefore,
\begin{equation*}
\begin{aligned}
\bigl|&p_t^L(x, y) - \bigl(\e_{\Delta_1}^\nu * \cdots * \e_{\Delta_N}^\nu\bigr)(x, y)\bigr| \\
&~\leq e^{\alpha t} \underbrace{\sum_{j=1}^N C_1 \Delta_j^{\nu+1} p_t^\Delta(x, y)}_{(1)} + e^{\alpha t} \underbrace{\sum_{j=1}^N \int_{d(z_0, z_1) \geq R} p_{\tau_{j-1}}^\Delta(x, z_0) p_{\Delta_j}^\Delta(z_0, z_1) p_{t-\tau_j}(z_1, y) \,\dd(z_0, z_1)}_{(2)},
\end{aligned}
\end{equation*}
where $R$ is such that $\chi(r) = 1$ for $0 \leq r \leq R$.
The first term can be estimated by
\begin{equation*}
(1) \leq C_1 |\tau|^\nu \sum_{j=1}^N \Delta_j p_t^\Delta(x, y) = C_1 t|\tau|^\nu p_t^\Delta(x, y).
\end{equation*}
By Lemma~\ref{LemmaLargeOscillations}, whenever $|\tau| \leq \delta t$, the second term can be estimated by
\begin{equation*}
   (2) \leq C_2 \sum_{j=1}^N e^{-\epsilon/\Delta_j} p_t^\Delta (x, y) \leq C_3 e^{-\epsilon^\prime/|\tau|} p_t^\Delta(x, y) \leq C_4 t |\tau|^\nu p_t^\Delta(x, y),
\end{equation*}
with $\epsilon, \epsilon^\prime>0$. This finishes the proof.
\end{proof}

\section{Heat Kernel Asymptotics at the Cut Locus} \label{SectionAsymptoticsFar}

In this section, we use the convolution approximation from Thm.~\ref{ThmConvolutionApproximation} to obtain short-time asymptotic expansions of the heat kernel also in the case that $x, y \in M$ lie in each other's cut locus. As we will see, the form of such an asymptotic expansion depends on the behavior of the energy functional near its critical points on the space of paths between $x$ and $y$.

For an absolutely continuous path $\gamma: [0, 1] \longrightarrow M$, consider the {\em energy functional}
\begin{equation} \label{EnergyFunctional}
  E(\gamma) = \frac{1}{2} \int_0^1 \bigl|\dot{\gamma}(s)\bigr|^2 \dd s.
\end{equation}
Set
\begin{equation*}
  H_{xy}(M) := \bigl\{ \gamma \mid \gamma~\text{is absolutely continuous with}~ E(\gamma) < \infty\bigr\}.
\end{equation*}
This an infinite-dimensional manifold modelled on the Hilbert space $H^1([0, 1], \R^n)$. For details on the manifold structure on $H_{xy}(M)$, see e.g.\ Section~2.3 in \cite{klingenberg}).

Let $\Gamma_{xy}^{\min} \subset H_{xy}(M)$ denote the set of length minimizing geodesics between the points $x, y \in M$. It is well-known that for each $\gamma \in \Gamma_{xy}^{\min}$, we have $E(\gamma) = d(x, y)^2/2$, and conversely, the set $\Gamma_{xy}^{\min}$ is exactly the set of global minima of $E$ on $H_{xy}(M)$. Moreover, $\Gamma_{xy}^{\min}$ is compact in $H_{xy}(M)$ \cite[Prop.~2.4.11]{klingenberg}.

\begin{definition} \label{DefNonDegenerate}
Let $x, y \in M$. We say that $\Gamma_{xy}^{\min}$ is a {\em non-degenerate submanifold}, if it is a submanifold of $H_{xy}(M)$, and if furthermore for each $\gamma \in \Gamma_{xy}^{\min}$, the Hessian of $E$ is non-degenerate when restricted to a complementary subspace to the tangent space $T_\gamma \Gamma_{xy}^{\min}$.
\end{definition}

This is just the well-known Morse-Bott condition on the energy function near the submanifold $\Gamma_{xy}^{\min}$.

\begin{theorem}[Short-time asymptotics, cut locus] \label{ThmEstimatesAwayFromCutLocus}
 Let $M$ be a compact manifold and let $L$ be a self-adjoint Laplace-type \mbox{operator}, acting on sections of a metric vector bundle $\V$ over $M$. For $x, y \in M$, assume that the set $\Gamma_{xy}^{\min}$ is a disjoint union of $k$ non-degenerate submanifolds of dimensions $d_1, \dots, d_k$. Then the heat kernel has the complete asymptotic expansion
 \begin{equation*}
   \frac{p_t^L(x, y)}{\e_t(x, y)} ~\sim~ \sum_{l=1}^k (4\pi t)^{-d_l/2} \sum_{j=0}^\infty t^j \frac{\Phi_{j, l}(x, y)}{j!}
 \end{equation*}
as $t \rightarrow 0$.
\end{theorem}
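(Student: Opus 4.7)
The plan is to combine Theorem~\ref{ThmConvolutionApproximation} with the path-integral representation of the convolution product (Lemma~\ref{LemmaPathIntegral}, referenced in the proof sketch of Thm.~\ref{ThmGaussianLowerBounds}) and then to apply the Morse-Bott version of Laplace's method from the appendix. First I would fix an integer $N$ and take the equidistant partition $\tau$ of $[0,t]$ with $|\tau| = t/N \leq \delta t$, so that Thm.~\ref{ThmConvolutionApproximation} applies and yields
\begin{equation*}
\bigl|p_t^L(x,y) - \bigl(\e_{t/N}^\nu * \cdots * \e_{t/N}^\nu\bigr)(x,y)\bigr| \leq C p_t^\Delta(x,y)\, t\, |\tau|^\nu,
\end{equation*}
while Lemma~\ref{LemmaPathIntegral} rewrites the convolution as
\begin{equation*}
\bigl(\e_{t/N}^\nu * \cdots * \e_{t/N}^\nu\bigr)(x,y) = (4\pi t)^{-nN/2} \int_{H_{xy;\tau}(M)} e^{-E(\gamma)/2t}\, \Upsilon_{\tau,\nu}(t,\gamma)\, d\gamma,
\end{equation*}
an integral over the $n(N-1)$-dimensional manifold of piecewise geodesics broken at the partition nodes, with $\Upsilon_{\tau,\nu}$ smooth and polynomial in $t$.

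Next I would identify the critical set of $E$ on $H_{xy;\tau}(M)$. A piecewise geodesic is critical precisely when the tangents match at every breakpoint, i.e.\ when it is a genuine smooth geodesic from $x$ to $y$; in particular the minima of $E$ on $H_{xy;\tau}(M)$ are exactly $\Gamma_{xy}^{\min}$. The hypothesis that $\Gamma_{xy}^{\min}$ is a disjoint union of non-degenerate submanifolds of $H_{xy}(M)$ of dimensions $d_1,\ldots,d_k$ then needs to be transferred to $H_{xy;\tau}(M)$: the tangent spaces of each component $C_l \subseteq \Gamma_{xy}^{\min}$ sit inside $H_{xy;\tau}(M)$ with unchanged dimension $d_l$, and the transverse Hessian of $E|_{H_{xy;\tau}(M)}$ is a finite-difference approximation of the Jacobi form on $T_\gamma H_{xy}(M)$ which, for $N$ large enough, inherits non-degeneracy.

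Then I would apply Laplace's method (Morse-Bott version, Appendix~\ref{AppendixLaplace}) to each component $C_l$. On a $D$-dimensional space, a non-degenerate critical submanifold of dimension $d_l$ at minimum value $E_0 = d(x,y)^2/2$ contributes the complete asymptotic expansion
\begin{equation*}
(4\pi t)^{(D-d_l)/2}\, e^{-E_0/2t} \sum_{j=0}^\infty t^j a_{j,l}
\end{equation*}
with smooth coefficients obtained by integrating derivatives of $\Upsilon_{\tau,\nu}$ over $C_l$ against powers of the inverse transverse Hessian. With $D = n(N-1)$ the prefactors combine to
\begin{equation*}
(4\pi t)^{-nN/2} (4\pi t)^{(n(N-1)-d_l)/2} = (4\pi t)^{-n/2} (4\pi t)^{-d_l/2},
\end{equation*}
so dividing by $\e_t(x,y) = (4\pi t)^{-n/2} e^{-d(x,y)^2/4t}$ produces exactly a contribution of the form $(4\pi t)^{-d_l/2} \sum_j t^j \Phi_{j,l}(x,y)/j!$, as claimed.

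The hard part will be two-fold. First, transferring the Morse-Bott condition from $H_{xy}(M)$ to $H_{xy;\tau}(M)$: one must verify that the discrete second variation of $E$ on piecewise geodesic variations converges to the Jacobi form as $|\tau| \to 0$ in a way uniform enough to preserve non-degeneracy; for $|\tau|$ shorter than the conjugate-point threshold along each $\gamma \in \Gamma_{xy}^{\min}$ this is standard but must be checked. Second, balancing the two error sources: the convolution remainder is of size $|\tau|^\nu t\, p_t^\Delta(x,y)$, while Laplace's method yields a remainder of order $t^{M+1}\e_t(x,y)$ at stage $M$. Since the Gaussian upper bound gives $p_t^\Delta(x,y) \leq C t^{-n-1} \e_t(x,y)$, for any target order $M$ one can choose $\nu$ large enough (depending on $M$ and $n$) so that the convolution error is absorbed into the Laplace remainder. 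Uniqueness of asymptotic expansions in $t$ then shows that the resulting coefficients $\Phi_{j,l}$ are independent of $N$, completing the proof.
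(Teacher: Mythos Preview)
Your proposal is correct and follows essentially the same route as the paper: rewrite the convolution via Lemma~\ref{LemmaPathIntegral}, apply the Morse--Bott Laplace expansion of Thm.~\ref{ThmAsymptoticExpansion} to the resulting integral over $H_{xy;\tau}(M)$, and control the approximation error through Thm.~\ref{ThmConvolutionApproximation} together with the Gaussian upper bound, concluding by uniqueness of asymptotic expansions that the coefficients stabilize. The only notable difference is that the paper inserts a small bootstrap---running the argument first for $L=\Delta$ to upgrade the bound on $p_t^\Delta(x,y)/\e_t(x,y)$ from the crude $t^{-n/2-1}$ to the sharp $t^{-d/2}$---before the final error balance, whereas you absorb everything by taking $\nu$ large; both work, and the paper, like you, does not spell out the transfer of the Morse--Bott condition to the finite-dimensional space beyond noting that $\Gamma_{xy}^{\min}\subset H_{xy;\tau}(M)$ for $|\tau|$ small.
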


\begin{remark}  In particular, if $(x, y) \in M \bowtie M$ so that $\Gamma_{xy}^{\min} = \{\gamma\}$ with $\gamma$ the unique minimizing geodesic between $x$ and $y$, then we recover the asymptotic expansion from before, Thm.~\ref{ThmStrongEstimates}.
\end{remark}

\begin{remark}
 The Hessian of the energy at an element $\gamma \in \Gamma_{xy}^{\min}$ can be explicitly calculated and is closely related to the Jacobi equation, see e.g.\ \cite[Section 13]{MilnorMorseTheory}.
\end{remark}

\begin{remark}
Thm.~\ref{ThmEstimatesAwayFromCutLocus} can be generalized to the case that $\Gamma_{xy}^{\min}$ is a {\em degenerate} submanifold of $H_{xy}(M)$. In this case, the explicit form of the asymptotic expansion depends on the type of degeneracy of $E$. In general, it can become quite complicated; for example it may contain logarithmic terms. For a discussion of this, see \cite[ pp.~20-24]{molchanov}.
\end{remark}

\begin{example}
A prototypical example where $\Gamma_{xy}^{\min}$ is a non-degenerate submanifold of dimension greater than zero is when $x$ and $y$ are antipodal points on a sphere. In this case, $\dim \Gamma_{xy}^{\min} = n-1$. For an explicit calculation of $\Phi_0(x, y)$ in this case, see \cite[Example~5.3.3]{hsu}.
\end{example}

The convolution approximation from Thm.~\ref{ThmConvolutionApproximation} is connected to the energy functional as follows. For $x, y \in M$ fixed, set
\begin{equation*}
  M^{(N-1)} := \bigl\{ (x_1, \dots, x_{N-1}) \in M^{N-1} \mid (x_{j-1}, x_j) \in M \bowtie M~\text{for}~j=1, \dots, N\bigr\},
\end{equation*}
with the convention $x_0 := x$, $x_N := y$. For any partition $\tau = \{0 = \tau_0 < \tau_1 < \dots < \tau_N = 1\}$ of the interval $[0, 1]$, the manifold $M^{(N-1)}$ is diffeomorphic to the finite-dimensional submanifold 
\begin{equation*}
  H_{xy;\tau}(M) := \bigl\{ \gamma \in H_{xy}(M) \mid \gamma|_{[\tau_{j-1}, \tau_j]}~ \text{is a unique minimizing geodesic for each}~j\bigr\}
\end{equation*}
of $H_{xy}(M)$ (by the condition that the paths $\gamma$ by unique minimizing, we want to express that we require $(\gamma(\tau_{j-1}), \gamma(\tau_j)) \in M \bowtie M$).
Namely, the evaluation map
\begin{equation*}
 \mathrm{ev}_\tau: H_{xy;\tau}(M) \longrightarrow M^{(N-1)}, ~~~~ \gamma \longmapsto \bigl(\gamma(\tau_1), \dots, \gamma(\tau_{N-1})\bigr)
\end{equation*}
is a  diffeomorphism between the two. For our purpose, it doesn't matter which Riemannian metric (or volume) we put on $H_{xy;\tau}(M)$; for simplicity we take the one that makes $\mathrm{ev}_\tau$ an isometry. 

\begin{lemma} \label{LemmaPathIntegral}
  The heat convolution product from Thm.~\ref{ThmConvolutionApproximation} can be written as an integral over $H_{xy;\tau}(M)$. More specifically, for a partition $\tau= \{ 0 = \tau_0 < \tau_1 < \dots < \tau_N = t\}$, denote by $\tilde{\tau}$ the corresponding partition of the interval $[0, 1]$, given by $\tilde{\tau}_j = \tau_j / t$. Then we have
\begin{equation} \label{PathIntegral}
  \bigl(\e_{\Delta_1 \tau}^\nu * \cdots * \e_{\Delta_N\tau}^\nu\bigr)(x, y) = (4\pi t)^{-nN/2} \int_{H_{xy;\tilde{\tau}}(M)} e^{-E(\gamma)/2t} \,\Upsilon^{\tilde{\tau}, \nu}(t, \gamma) \,\dd \gamma,
\end{equation}
where the integrand $\Upsilon_{\tilde{\tau}, \nu}(t, \gamma)$ is a certain smooth and compactly function on $H_{xy;\tau}(M)$ with values in $\mathrm{Hom}(\V_y, \V_x)$ that depends polynomially on $t$.
\end{lemma}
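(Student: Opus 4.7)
The plan is to unwind definitions and push the resulting iterated integral on $M^{N-1}$ forward along the evaluation diffeomorphism $\mathrm{ev}_{\tilde\tau} : H_{xy;\tilde\tau}(M) \to M^{(N-1)}$. With $z_0 := x$, $z_N := y$, the definition of convolution gives
\begin{equation*}
  \bigl(\e_{\Delta_1\tau}^\nu * \cdots * \e_{\Delta_N\tau}^\nu\bigr)(x,y)
  = \int_{M^{N-1}} \prod_{j=1}^N \e_{\Delta_j\tau}^\nu(z_{j-1}, z_j)\,\dd z_1 \cdots \dd z_{N-1},
\end{equation*}
and the cutoffs $\chi\bigl(d(z_{j-1},z_j)\bigr)$ built into each $\e_{\Delta_j\tau}^\nu$ force the integrand to vanish off $M^{(N-1)}$, so the integral may equivalently be taken over $M^{(N-1)}$.

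The first key step is to recognize the Gaussian exponent as $-E(\gamma)/(2t)$. For a piecewise geodesic $\gamma \in H_{xy;\tilde\tau}(M)$ with $\gamma(\tilde\tau_j) = z_j$, each segment $\gamma|_{[\tilde\tau_{j-1},\tilde\tau_j]}$ is a minimizing geodesic of constant speed $d(z_{j-1},z_j)/\Delta_j\tilde\tau$, so its energy equals $d(z_{j-1},z_j)^2/(2\Delta_j\tilde\tau)$. Using $\Delta_j\tau = t\,\Delta_j\tilde\tau$ we get
\begin{equation*}
  \sum_{j=1}^N \frac{d(z_{j-1},z_j)^2}{4\Delta_j\tau}
  = \frac{1}{2t}\sum_{j=1}^N \frac{d(z_{j-1},z_j)^2}{2\Delta_j\tilde\tau}
  = \frac{E(\gamma)}{2t},
\end{equation*}
which produces the announced factor $e^{-E(\gamma)/2t}$.

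The second step is to extract the global prefactor. The product of normalizers satisfies
\begin{equation*}
  \prod_{j=1}^N (4\pi\Delta_j\tau)^{-n/2}
  = (4\pi t)^{-nN/2}\prod_{j=1}^N (\Delta_j\tilde\tau)^{-n/2},
\end{equation*}
and the leftover factors $\prod_j (\Delta_j\tilde\tau)^{-n/2}$, together with the cutoffs $\prod_j \chi\bigl(d(z_{j-1},z_j)\bigr)$ and the polynomial part
\begin{equation*}
  \prod_{j=1}^N \sum_{k=0}^\nu (\Delta_j\tau)^k \frac{\Phi_k(z_{j-1}, z_j)}{k!}
\end{equation*}
(valued in $\mathrm{Hom}(\V_y,\V_x)$ by composing the Hom-fibers along the partition, and polynomial in $t$ since $\Delta_j\tau = t\,\Delta_j\tilde\tau$), get packaged into the definition of $\Upsilon^{\tilde\tau,\nu}(t,\gamma)$. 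Smoothness and compact support on $H_{xy;\tilde\tau}(M)$ then follow from the corresponding properties of $\chi$ and of the $\Phi_k$ together with smoothness of $\mathrm{ev}_{\tilde\tau}$.

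The last step is the change of variables: since by convention $\mathrm{ev}_{\tilde\tau}$ is an isometry, $\dd(z_1,\ldots,z_{N-1})$ pulls back to $\dd\gamma$ with unit Jacobian, and the formula \eqref{PathIntegral} follows. The whole argument is essentially definitional; the only point requiring care is the reparametrization bookkeeping between $\tau$ on $[0,t]$ and $\tilde\tau$ on $[0,1]$, which is exactly what makes the global scale $(4\pi t)^{-nN/2}$ appear in place of the naive $\prod (4\pi\Delta_j\tau)^{-n/2}$. I would regard that bookkeeping as the only subtle point of the proof.
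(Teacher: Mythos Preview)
Your proposal is correct and follows essentially the same approach as the paper: expand the convolution as an integral over $M^{(N-1)}$, identify the product of Gaussians with $e^{-E(\gamma)/2t}$ via the energy formula for piecewise geodesics, pull out the prefactor $(4\pi t)^{-nN/2}$ using $\Delta_j\tau = t\,\Delta_j\tilde\tau$, and absorb the remaining cutoffs, partition weights and polynomial sums into $\Upsilon^{\tilde\tau,\nu}$. Your write-up is in fact slightly more careful than the paper's about the $\tau$ versus $\tilde\tau$ bookkeeping and about the isometric change of variables via $\mathrm{ev}_{\tilde\tau}$.
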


\begin{proof}
Notice that for the path $\gamma \in H_{xy;\tau}(M)$ with $\gamma(\tau_j) = x_j$, we have
\begin{equation*}
  E(\gamma) = \frac{1}{2}\sum_{j=1}^N \frac{d(x_{j-1}, x_j)^2}{\Delta_j\tau}.
\end{equation*}
Now because the approximate heat kernel $\e_t^\nu$ is supported in $M \bowtie M$ (by choice of the cutoff function present in its definition), the convolution $\e_{\Delta_1 \tau}^\nu * \cdots * \e_{\Delta_N\tau}^\nu$ can be written as an integral over $M^{(N-1)}$,
\begin{equation*}
\begin{aligned}
\bigl(\e_{\Delta_1 \tau}^\nu * \cdots * \e_{\Delta_N\tau}^\nu\bigr)(x, y)&\\
= \int_{M^{(N-1)}} \exp&\left(-\frac{1}{4t}\sum_{j=1}^N d(x_{j-1}, x_j)^2\right) \cdot \\
&\cdot\prod_{j=1}^N \left[ \frac{\chi\bigl(d(x_{j-1}, x_j)\bigr)}{(4\pi \Delta_j\tau)^{n/2}} \sum_{i=1}^\nu (\Delta_j\tau)^j \frac{\Phi_i(x_{j-1}, x_j)}{i!}\right] \,\dd x_1\cdots \dd x_{N-1}.
\end{aligned}
\end{equation*}
We obtain formula \eqref{PathIntegral}, where
\begin{equation} \label{FormulaUpsilonTauNu}
  \Upsilon^{\tilde{\tau}, \nu}(t, \gamma) = \prod_{j=1}^N \left[(\Delta_j\tilde{\tau})^{-n/2} \chi\bigl(d(\gamma(\tilde{\tau}_{j-1}), \gamma(\tilde{\tau}_{j})\bigr)\sum_{i=1}^\nu (t \Delta_j\tilde{\tau})^j \frac{\Phi_i\bigl(\gamma(\tilde{\tau}_{j-1}),\gamma(\tilde{\tau}_{j})\bigr)}{i!}\right].
\end{equation}
This finishes the proof.
\end{proof}

The explicit formula for $\Upsilon^{\tilde{\tau}, \nu}$ is entirely unimportant for our purposes; we only take from it that $\Upsilon^{\tilde{\tau}, \nu}$ is a smooth, compactly supported function on $H_{xy;\tau}(M)$ that depends polynomally on $t$. 

Below, we will always write $\tau$ instead of $\tilde{\tau}$ for a partition of the interval $[0, 1]$.

\begin{proof}[of Thm.~\ref{ThmEstimatesAwayFromCutLocus}]
We will use Laplace's method on the path integral \eqref{PathIntegral}. In order to do this, we have to bring it into the form of Thm.~\ref{ThmAsymptoticExpansion} first, which is achieved by dividing by $\e_t(x, y)$ and setting $\phi(\gamma) := E(\gamma) - d(x, y)/2$. Then by Lemma~\ref{LemmaPathIntegral}, we obtain
\begin{equation*}
  \frac{\bigl(\e_{t\Delta_1 \tau}^\nu * \cdots * \e_{t\Delta_N\tau}^\nu\bigr)(x, y)}{\e_t(x, y)} = (4\pi t)^{-n(N-1)/2} \int_{H_{xy;{\tau}}(M)} e^{-\phi(\gamma)/2t} \,\Upsilon^{{\tau}, \nu}(t, \gamma) \,\dd \gamma,
\end{equation*}
which has the form \eqref{DefTheIntegral} since $\dim(H_{xy;\tau}(M)) = n(N-1)$.

It is clear that whenever the partition $\tau = \{ 0 = \tau_0 < \tau_1 < \dots < \tau_N = 1\}$ is fine enough, we have $\Gamma_{xy}^{\min} \subset H_{xy;\tau}(M)$. By assumption, $\Gamma_{xy}^{\min}$ is the direct sum of non-degenerate submanifolds $\Gamma_1, \dots, \Gamma_k$ of dimension $d_1, \dots, d_k$. Therefore, by Thm.~\ref{ThmAsymptoticExpansion} we obtain the asymptotic expansion
\begin{equation} \label{AsymptoticExpansionInutau}
  \frac{\bigl(\e_{t\Delta_1 \tau}^\nu * \cdots * \e_{t\Delta_N\tau}^\nu\bigr)(x, y)}{\e_t(x, y)} ~\sim~ \sum_{l=1}^k (4\pi t)^{-d_l/2} \sum_{j=0}^\infty t^j  \frac{\Phi_{j, l}^{\tau, \nu}(x, y) }{j!},
\end{equation}
where
\begin{equation} \label{IntegralFormulaPhij}
  \Phi_{j, l}^{\tau, \nu}(x, y) = \sum_{i=0}^j \frac{1}{i!(j-i)!}\int_{\Gamma_l} \frac{P^{j-i}{\Upsilon^{\tau, \nu}}^{(i)}(0, \gamma)}{\det \bigl(\nabla^2 E|_{N_\gamma\Gamma_l}\bigr)^{1/2}}\, \dd \gamma
\end{equation}
for some second order differential operator $P$ on $H_{xy;\tau}(M)$. Here, $\det (\nabla^2 E|_{N_\gamma\Gamma_l})^{1/2}$ denotes the determinant of $\nabla^2 E|_\gamma$, restricted to the normal space $N_\gamma\Gamma_l$ of $T_\gamma\Gamma_l$ in $T_\gamma H_{xy;\tau}(M)$. In particular, if we set $d := \max_{1\leq l \leq k} d_l$, there exists a constant $C_0>0$ such that
\begin{equation} \label{EstimateConvolution}
  \left|\frac{\bigl(\e_{t\Delta_1 \tau}^\nu * \cdots * \e_{t\Delta_N\tau}^\nu\bigr)(x, y)}{\e_t(x, y)}\right| \leq C_0 t^{-d/2}
\end{equation}
for all $0 < t \leq T$.

By Thm.~\ref{ThmConvolutionApproximation}, for each $T>0$ and each $\nu \in \N_0$, there exist constants $C_1, \delta>0$ such that
\begin{equation}\label{PathIntegralEstimateLater}
  \left| \frac{p^L_t(x, y)}{\e_t(x, y)} - \frac{\bigl(\e_{t\Delta_1 \tau}^\nu * \cdots * \e_{t\Delta_N\tau}^\nu\bigr)(x, y)}{\e_t(x, y)}\right| \leq C_1 t^{1+\nu}|\tau|^\nu \frac{p_t^\Delta(x, y)}{\e_t(x, y)},
\end{equation}
for any partition $\tau$ of the interval $[0, 1]$ with $|\tau| \leq \delta$.
By the Gaussian estimate from above (Thm.~\ref{ThmGaussianUpperBounds}) follows $p_t^\Delta (x, y) \leq C_2 t^{-n/2-1} \e_t(x, y)$. Therefore  \eqref{PathIntegralEstimateLater} yields
\begin{equation} \label{PreviousAsymptotics}
  \left| \frac{p_t^L(x, y)}{\e_t(x, y)} -  \frac{\bigl(\e_{t\Delta_1 \tau}^\nu * \cdots * \e_{t\Delta_N\tau}^\nu\bigr)(x, y)}{\e_t(x, y)} \right| \leq C_3 t^{\nu-n/2} |\tau|^\nu.
\end{equation}
Using \eqref{PreviousAsymptotics} and \eqref{EstimateConvolution} for $L=\Delta$, the Laplace-Beltrami operator on $M$, some $\nu \geq n/2 - k/2 -1$ and $|\tau|\leq \delta$, we get
\begin{equation*}
\begin{aligned}
  \frac{p^\Delta_t(x, y)}{\e_t(x, y)} 
  &\leq \left| \frac{p_t^\Delta(x, y)}{\e_t(x, y)} -  \frac{\bigl(\e_{t\Delta_1 \tau}^\nu * \cdots * \e_{t\Delta_N\tau}^\nu\bigr)(x, y)}{\e_t(x, y)} \right| + \left| \frac{\bigl(\e_{\Delta_1 \tau}^\nu * \cdots * \e_{\Delta_N\tau}^\nu\bigr)(x, y)}{\e_t(x, y)}\right|\\
  &\leq C_4 t^{\nu-n/2} |\tau|^\nu + C_5 t^{-d/2} \leq (C_4 \delta^\nu + C_5) t^{-d/2} =: C_6 t^{-d/2}.
\end{aligned}
\end{equation*}
Therefore, \eqref{PathIntegralEstimateLater} improves to
\begin{equation*}
  \left| \frac{p^L_t(x, y)}{\e_t(x, y)} -  \frac{\bigl(\e_{t\Delta_1 \tau}^\nu * \cdots * \e_{t\Delta_N\tau}^\nu\bigr)(x, y)}{\e_t(x, y)}\right| \leq C_1 t^{1+\nu}|\tau|^\nu \cdot C_6 t^{-d/2} \leq C_7 t^{1+\nu - d/2}
\end{equation*}
From this follows that the heat kernel has an asymptotic expansion up to the order $t^{\nu-d/2}$, the coefficients of which must coincide with the asymptotic expansion \eqref{AsymptoticExpansionInutau} of $\e_{t\Delta_1 \tau}^\nu * \cdots * \e_{t\Delta_N\tau}^\nu$ up to that order. Because asymptotic expansions are unique, this also shows that the coefficients $\Phi_{j, l}^{\tau, \nu}(x, y)$ from \eqref{AsymptoticExpansionInutau} must stabilize for $\nu$ large enough and $\tau$ fine enough. More precisely, if $j \leq \nu, \nu^\prime$ and $|\tau|, |\tau^\prime|\leq \delta$, we have 
\begin{equation*}
\Phi_{j, l}^{\tau, \nu}(x, y) = \Phi_{j, l}^{\tau^\prime, \nu^\prime}(x, y).
\end{equation*}
 Therefore
\begin{equation*} 
 \Phi_{j, l}(x, y) := \Phi_{j, l}^{\tau, \nu}(x, y)
\end{equation*}
for any choice of $\nu \geq j$ and $|\tau| \leq  \delta$ is well defined.

Because $\nu$ was arbitrary, we obtain that $p_t^L(x, y)/\e_t(x, y)$ has a complete asymptotic expansion of the claimed form, with the coefficients $\Phi_{j, l}(x, y)$ given by the formula \eqref{IntegralFormulaPhij} for $\nu$ large enough and $|\tau|$ small enough.  
\end{proof}

\appendix

\section{Laplace's method} \label{AppendixLaplace}

Laplace's method is a way to calculate asymptotic expansions as $t \rightarrow 0$ from above for integrals of the form
\begin{equation} \label{DefTheIntegral}
  I(t, a) := (4 \pi t)^{-\dim(\Omega)/2}\int_\Omega e^{-\phi(x)/2t} a(t, x)\, \dd x.
\end{equation}
Here, $t>0$, $\Omega$ is a Riemannian manifold, $\phi \in C^\infty(\Omega)$ is a non-negative function and $a(t, x)$ is smooth and compactly supported with respect to the $x$ variable and depends smoothly on $t$. The following result is very well known, however, it seems that it is nowhere to be found in quite the form needed, so for convenience of the reader, we give a proof in this appendix.

\begin{theorem}[Laplace Expansion] \label{ThmAsymptoticExpansion}
Assume that $\phi$ is non-negative and that $\Gamma := \phi^{-1}(0)$ is a disjoint union of submanifolds $\Gamma_1, \dots, \Gamma_k$ of dimensions $d_1, \dots, d_k$. Suppose that for each $l= 1, \dots, k$, and each $x \in \Gamma_l$, the Hessian $\nabla^2\phi|_x$ is non-degenerate when restricted to the normal space $N_x \Gamma_l$ in $T_x \Omega$. Then $I(t, a)$ has a complete asymptotic expansion as $t$ goes to zero from above. More explicitly, there exists a second order differential operator $P$ such that we have
\begin{equation} \label{Expansion2}
  I(t, a) \sim  \sum_{l=1}^k (4\pi t)^{-d_l/2}\sum_{j=0}^\infty t^j \sum_{i=0}^j \frac{1}{i!(j-i)!}\int_{\Gamma_l} \frac{P^{j-i}a^{(i)}(0, x)}{\det \bigl(\nabla^2 \phi|_{N_x\Gamma_l}\bigr)^{1/2}}\, \dd x
\end{equation}
where $a^{(i)}(0, x)$ denotes the $i$-th derivative of $a$ with respect to $t$ at $t=0$.
\end{theorem}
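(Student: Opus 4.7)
The plan is to combine a partition-of-unity localization with the parametric Morse--Bott lemma and classical Gaussian asymptotics in normal fibers. First I would choose a partition of unity $\{\chi_\infty, \chi_1, \dots, \chi_k\}$ on $\Omega$ with $\chi_l$ supported in a tubular neighborhood $\mathcal{U}_l$ of $\Gamma_l$ and $\chi_\infty$ supported in $\Omega \setminus \Gamma$. On $\mathrm{supp}(\chi_\infty)$ the function $\phi$ is bounded below by some $c > 0$, so the corresponding piece of $I(t,a)$ is dominated by a constant times $e^{-c/2t}$ and contributes only to the remainder. It therefore suffices to analyze each piece $I_l(t, \chi_l a)$ separately.

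Near $\Gamma_l$, I would identify $\mathcal{U}_l$ with an open neighborhood of the zero section of the normal bundle $N\Gamma_l$ via the normal exponential map, introducing fibered coordinates $(\xi, v)$ with $\xi \in \Gamma_l$ and $v \in N_\xi \Gamma_l$. The Riemannian volume on $\Omega$ becomes $J(\xi, v)\,\dd\xi\,\dd v$ with $J(\xi, 0) = 1$. The Morse--Bott hypothesis on $\phi$ allows the parametric Morse lemma to be applied fiberwise over $\Gamma_l$: there is a smooth family of fiber diffeomorphisms $v \mapsto \psi_\xi(v)$ of a neighborhood of $0 \in N_\xi \Gamma_l$ such that $\phi(\xi, \psi_\xi(v)) = \tfrac{1}{2} Q_\xi(v, v)$, where $Q_\xi := \nabla^2 \phi|_{N_\xi \Gamma_l}$. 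After this substitution, $I_l$ takes the normalized form
\begin{equation*}
  I_l(t, \chi_l a) = (4\pi t)^{-\dim(\Omega)/2} \int_{\Gamma_l}\int_{N_\xi \Gamma_l} e^{-Q_\xi(v,v)/4t}\, \tilde a_l(t,\xi,v)\,\dd v\,\dd \xi,
\end{equation*}
where $\tilde a_l$ is smooth, compactly supported in $v$, and depends smoothly on $t$ down to $t=0$, absorbing $\chi_l a$ and the two Jacobian factors.

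Next I would apply the classical Gaussian asymptotic expansion in each normal fiber: writing $\tilde a_l(t, \xi, v)$ as a joint Taylor series in $t$ and $v$ about the origin and integrating term by term via Wick's theorem for the moments of $e^{-Q_\xi(v,v)/4t}$ yields
\begin{equation*}
  \int_{N_\xi \Gamma_l} e^{-Q_\xi(v,v)/4t}\, f(v)\,\dd v \sim (4\pi t)^{(n-d_l)/2}\det(Q_\xi)^{-1/2} \sum_{m=0}^\infty \frac{t^m}{m!}\,(P^m f)(0),
\end{equation*}
for a certain constant-coefficient second-order differential operator $P$ in the fiber variables built from the inverse form $Q_\xi^{-1}$. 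Combining this fiber expansion with the Taylor expansion of $\tilde a_l$ in $t$, the double sum $\sum_j \sum_{i=0}^j \tfrac{1}{i!(j-i)!} P^{j-i} \tilde a_l^{(i)}(0, \xi, 0)$ in \eqref{Expansion2} emerges, and the prefactor $(4\pi t)^{-\dim(\Omega)/2+(n-d_l)/2} = (4\pi t)^{-d_l/2}$ gives the claimed exponent. Truncating the Taylor series at order $N$, standard remainder estimates (controlled by the rescaling $v = \sqrt{t}\,u$ and uniform in $\xi$ by compactness of $\Gamma_l$) produce an error $O(t^{N+1})$, so the formal series is a genuine asymptotic expansion.

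The principal technical obstacle is the parametric Morse--Bott lemma: one must produce a single smooth family $\psi_\xi$ of normalizing diffeomorphisms varying smoothly with $\xi \in \Gamma_l$, and then verify that the operator $P$ and the factor $\det(\nabla^2 \phi|_{N_\xi \Gamma_l})^{1/2}$ transform naturally under the fiber change of coordinates, so that the coefficients displayed in \eqref{Expansion2} are intrinsic to $(\phi, \Gamma_l, a)$ and independent of the auxiliary local trivialization of $N\Gamma_l$. Once this intrinsic character is secured, the rest of the argument reduces to the well-known Gaussian (Wick) calculus on each fiber.
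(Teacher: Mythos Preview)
Your proposal is correct and follows the same overall architecture as the paper: localize to tubular neighborhoods, invoke exponential decay away from $\Gamma$ (the paper isolates this as a separate lemma), apply the parametric Morse--Bott lemma to reduce to a quadratic phase on the normal bundle, and then expand the resulting Gaussian fiber integral. The ingredients and their roles match.

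The one genuine difference is in how the fiber expansion is obtained. You expand the amplitude in a Taylor series in $(t,v)$ and compute the Gaussian moments via Wick's theorem, reading off the operator $P$ from the contraction combinatorics. The paper instead exploits the heat-equation identity $(\partial_t + \Delta_Q)\bigl\{(4\pi t)^{-(N-d)/2}e^{-\langle v,Qv\rangle/4t}\bigr\}=0$: integrating by parts once gives the recursion $\partial_t\bigl[(4\pi t)^{d/2}I(t,a)\bigr]=(4\pi t)^{d/2}\bigl(I(t,\dot a)+I(t,Pa)\bigr)$, and then a Taylor expansion \emph{in $t$ alone} (with integral remainder) produces the double sum $\sum_{i\le j}\frac{1}{i!(j-i)!}P^{j-i}a^{(i)}$ directly, the leading term being supplied by a dominated-convergence computation of $\lim_{t\to0}(4\pi t)^{d/2}I(t,a)$. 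This bypasses the Wick combinatorics and yields the operator $P$ in one stroke (conjugated by the Jacobian of the Morse chart), at the price of a slightly indirect argument. Your route is the more classical stationary-phase style and makes the structure of the coefficients more transparent; the paper's route is shorter and gives uniform remainder control essentially for free from the Taylor remainder formula.

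Two minor remarks. First, your appeal to ``compactness of $\Gamma_l$'' is not quite what is given; rather, $a$ is compactly supported, so only $\Gamma_l\cap\mathrm{supp}\,a$ matters (the paper handles this by embedding into a compact manifold). Second, your worry about intrinsicness of $P$ is not actually needed for the statement as written: the theorem only asserts the \emph{existence} of some second-order $P$, and the paper's $P$ visibly depends on the Morse chart $\kappa$.
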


\begin{remark}
The Laplace expansion of an integral of the form $I(t, a)$ is closely related to the {\em method of stationary phase}, which calculates asymptotic expansions of the integral $t \mapsto I(it, a)$. Laplace's method is simpler in the sense that here, only critical points which are minima contribute to the asymptotic expansion, while for integrals with imaginary exponent, all critical points contribute. Compare e.g.\ \cite{ArnoldStationaryPhase} or \cite[Section~1.2]{Duistermaat}.
\end{remark}

\begin{lemma} \label{LemmaExponentialDecay}
Under the assumptions of Thm.~\ref{ThmAsymptoticExpansion}, suppose that $a(t, x) = 0$ for all $x$ in a neighborhood of $\Gamma$ and all $0\leq t \leq \delta$, for some $\delta>0$. Then there exist constants $T, C, \varepsilon>0$ such that for all $t \leq T$, we have $I(t, a) \leq C e^{-\varepsilon/t}$.
\end{lemma}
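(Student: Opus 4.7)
The plan is to exploit the fact that because $a(t,x)$ vanishes on an open neighborhood $U$ of $\Gamma$ for all $t \in [0,\delta]$, the Gaussian weight $e^{-\phi(x)/2t}$ provides exponential decay on the support of $a$, since $\phi$ vanishes precisely on $\Gamma$. The prefactor $(4\pi t)^{-\dim(\Omega)/2}$ is only polynomial in $1/t$ and will be absorbed into the exponential.

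Concretely, first I would fix a compact set $K \subset \Omega$ containing the supports of $a(t,\cdot)$ for $t \in [0,\delta]$ (which is implicit in the compact-support assumption; one may shrink $\delta$ and use smoothness of $a$ in $t$ if needed). By hypothesis, $\mathrm{supp}\, a(t,\cdot) \subseteq K \setminus U$ for all $t \in [0,\delta]$, and since $K \setminus U$ is a compact subset of $\Omega \setminus \Gamma$, while $\phi \geq 0$ with $\phi^{-1}(0) = \Gamma$, we obtain
\begin{equation*}
  c := \inf_{x \in K \setminus U} \phi(x) > 0.
\end{equation*}
Next, by smoothness of $a$ in $t$ and compactness of $K$, one has $M := \sup_{t \in [0,\delta], x \in K} |a(t,x)| < \infty$. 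Then for $t \in (0,\delta]$,
\begin{equation*}
  |I(t,a)| \leq (4\pi t)^{-\dim(\Omega)/2}\, e^{-c/2t} \cdot M \cdot \vol(K).
\end{equation*}
Finally, for any $\varepsilon < c/2$, the elementary inequality $t^{-\dim(\Omega)/2} e^{-c/2t} \leq C_\varepsilon e^{-\varepsilon/t}$ (valid for all $t > 0$, since $s^k e^{-s}$ is bounded for $s \geq 0$) yields the claimed bound $|I(t,a)| \leq C e^{-\varepsilon/t}$ for all $0 < t \leq T := \delta$.

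The argument is essentially routine; the only substantive point is arranging uniform control of $\mathrm{supp}\, a(t,\cdot)$ and of $a$ itself on $[0,\delta]$, which follows from the compact-support hypothesis and smoothness in $t$. No non-degeneracy of $\phi$ or structure of $\Gamma$ is actually used here, only the positivity of $\phi$ on $\Omega \setminus \Gamma$ together with the separation of $\mathrm{supp}\, a(t,\cdot)$ from $\Gamma$.
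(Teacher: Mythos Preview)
Your proof is correct and follows essentially the same approach as the paper: both establish a positive lower bound for $\phi$ on the (compact) support of $a(t,\cdot)$ away from $\Gamma$, then use this to extract exponential decay from $e^{-\phi/2t}$ and absorb the polynomial prefactor. The only cosmetic difference is that the paper bounds the integral by $\|a(t,-)\|_{L^1}$ directly, whereas you use $M \cdot \mathrm{vol}(K)$; your remark that non-degeneracy of $\phi$ is not needed here is accurate.
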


\begin{proof}
  Let $N := \dim(\Omega)$. Set
\begin{equation} \label{SupportA}
  A := \text{closure of}~~\bigcup_{0 \leq t \leq \delta} \mathrm{supp}\, a(t, -)
\end{equation}
(which is compact) and set
\begin{equation*}
  \varepsilon^\prime := \min_{x \in A} \phi(x).
\end{equation*}  
Notice that $\varepsilon^\prime>0$ because $A \cap \Gamma = \emptyset$. Therefore,
  \begin{equation*}
    I(t, a) \leq (4 \pi t)^{-N/2} e^{-\varepsilon^\prime/2t} \int_\Omega a(t, x) \dd x \leq (4\pi t)^{-N/2} e^{-\varepsilon^\prime/2t} \|a(t, -)\|_{L^1} \leq C e^{-\varepsilon/t},
  \end{equation*}
  if we choose $0 < \varepsilon < \varepsilon^\prime$ and $C>0$ appropriately.
\end{proof}

\begin{proof}[of Thm.~\ref{ThmAsymptoticExpansion}]
We may write the integral over $\Omega$ as a sum of integrals over open subsets $\Omega_1, \dots, \Omega_k$ such that the union of the $\Omega_l$ is dense in $\Omega$, and such that $\Gamma_l \subset \Omega_l$ for each $l=1, \dots, k$. The asymptotic expansion of the integral over $\Omega$ will then the be sum of the asymptotic expansions of the integrals over the manifolds $\Omega_l$. Therefore, we may assume that $k=1$, i.e.\ $\Gamma$ is a non-degenerate submanifold of dimension $d$.

Let $N := \dim(\Omega)$ and let $A$ as in \eqref{SupportA}. Since $A$ is compact, we may without loss of generality assume that also $\Omega$ and hence $\Gamma$ is compact. Otherwise embed some open neighborhood of $A$ isometrically into a compact manifold $\Omega^\prime$, transplant $\phi$ and $a$ there and replace $\Omega$ by $\Omega^\prime$ in the definition of $I(t, a)$. This does not alter the value of $I(t, a)$. 

Let $N\Gamma \subseteq T\Omega$ be the normal bundle of $\Gamma$. Then there is an open neighborhood $V$ of the zero section in $N\Gamma$ and an open neighborhood $U$ of $\Gamma$ in $\Omega$ together with a diffeomorphism $\kappa: V \longrightarrow U$ such that
\begin{equation*}
  \bigl(\phi \circ \kappa\bigr) (x, v) = \nabla^2\phi|_x[v, v], ~~~~~~~ (x, v) \in V.
\end{equation*}
This can be proved using the implicit function theorem, compare e.g.\ Lemma~1.2.2 in \cite{Duistermaat}. Clearly, we have $d \kappa|_{(x, 0)} = \id_x$.

Furthermore, we may assume that $A \subset U$. Namely otherwise, we can choose a cutoff function $\chi \in C^\infty_c(U)$ that is equal to one on a neighborhood of $\Gamma$ and split  $I(t, a) = I(t, \chi a) + I(t, (1-\chi)a)$, where the second summand does not contribute to the asymptotic expansion because of Lemma~\ref{LemmaExponentialDecay}. 

We now may use the transformation formula to obtain
\begin{equation} \label{EquationFranzi}
\begin{aligned}
  I(t, a) &= (4 \pi t)^{-N/2}\int_U e^{-\phi(x)/2t} a(t, x)\, \dd x\\
  &= (4\pi t)^{-N/2} \int_\Gamma \int_{V_x} e^{-\<v, Q(x)v\>/4t} a\bigl(t,  \kappa(x, v)\bigr) \bigl|\det\bigl(d \kappa|_{(x,v)}\bigr)\bigr| \dd v \dd x,
\end{aligned}
\end{equation}
where we wrote $Q(x) := \nabla^2\phi|_{N_x\Gamma}$ and $V_x := V \cap N_x \Gamma$. It is well known that for any $(N-d)$-dimensional Euclidean vector space $W$, any positive definite endomorphism $Q$ of $W$ and any continuous function $f = f(t, x)$ on $\R \times W$ which is bounded in the $x$ variable and depends smoothly on $t$, one has
\begin{equation*}
  \lim_{t \rightarrow 0}(4\pi t)^{-(N-d)/2}\int_W e^{-\<v, Qv\>/4t} f(t, v) \dd v = \det(Q)^{-1/2} f(0, 0).
\end{equation*}
Furthermore, for all $t$, we have
\begin{equation*}
  \left|(4\pi t)^{-(N-d)/2}\int_W e^{-\<v, Qv\>/4t} f(t, v) \dd v\right| \leq \|f(t, -)\|_\infty.
\end{equation*}
Therefore since $\Gamma$ is compact, we may exchange integration over $\Gamma$ and the limit $t \rightarrow 0$ in \eqref{EquationFranzi} to conclude
\begin{equation} \label{ShortTimeLimit}
  \lim_{t \rightarrow 0} (4 \pi t)^{d/2} I(t, a) = \int_\Gamma \frac{a\bigl(0, \kappa(x, 0)\bigr)}{\det\bigl(Q(x)\bigr)^{1/2}} \bigl|\det\bigl(d \kappa|_{(x, 0)}\bigr)\bigr| \dd x = \int_\Gamma \frac{a(0, x)}{\det\bigl(\nabla^2\phi|_{N_x \Gamma}\bigr)^{1/2}} \dd x
\end{equation}
Now on the vector spaces $N_x \Gamma$, define the $Q$-Laplacian $\Delta_Q$ by the formula 
\begin{equation*}
\Delta_{Q} f(v) = -\<Q(x)^{-1}, D^2f|_v\>.
\end{equation*}
This patches together to a smooth differential operator on $N\Gamma$ satisfying
\begin{equation*}
  \Bigl(\frac{\partial}{\partial t} + \Delta_{Q}\Bigr)\bigl\{(4\pi t)^{-(N-d)/2} e^{-\<v, Q(x)v\>/4t}\bigr\} = 0.
\end{equation*}
Therefore, integrating by parts, we obtain
\begin{equation*}
\begin{aligned}
  \frac{\partial}{\partial t} \bigl\{(4 \pi t)^{d/2}& I\bigl(t, a\bigr)\bigr\} - (4 \pi t)^{d/2} I\bigl(t, \dot{a}\bigr)\\
  &= -(4\pi t)^{-(N-d)/2}\int_\Gamma \int_{V_x} e^{-\< v, Q(x)v\>/4t} \Delta_Q \Bigl\{ a\bigl(t,\kappa(x, v)\bigr) \bigl|\det\bigl(d \kappa|_{(x,v)}\bigr)\bigr|\Bigr\} \dd v\dd x\\
  &= (4\pi t)^{-(N-d)/2}\int_U e^{-\phi(x) /2t} P a(t, x) \dd x = (4 \pi t)^{d/2}I(t, Pa),
\end{aligned}
\end{equation*}
where for $f \in C^\infty(U)$, we set
\begin{equation*}
  (Pf)(y) = -\Delta_Q\bigl\{f(v) \bigl|\det\bigl(d \kappa|_{(x, v)}\bigr)\bigr|\bigr\}\bigl|_{(x,v) = \kappa^{-1}(y)}\bigl|\det\bigl(d \kappa^{-1}|_{y}\bigr)\bigr|,
\end{equation*}
so that $P$ is some second-order differential operator. Let $J(t, a) := (4 \pi t)^{d/2}I(t, a)$. Then by Taylor's formula and the Leibnitz rule, for all $\varepsilon >0$ and $\nu \in \N$,
\begin{align*}
 J(t, a) &= \sum_{j=0}^\nu \frac{1}{j!} \frac{\partial^j}{\partial \varepsilon^j} \bigl\{J(\varepsilon, a)\bigr\} (t-\varepsilon)^j + \int_\varepsilon^t \frac{(t-s)^\nu}{\nu!} \frac{\partial^{\nu+1}}{\partial s^{\nu+1}} \bigl\{J(s, a)\bigr\} \dd s \\
 &= \sum_{j=0}^\nu \frac{1}{j!}\sum_{i=0}^j \binom{j}{i} J\bigl(\varepsilon, P^{j-i}a^{(i)}\bigr) (t-\varepsilon)^j + R^\nu(\varepsilon, t),
\end{align*}
where
\begin{equation}
  R^\nu(\varepsilon, t) = \sum_{i=0}^{\nu+1} \binom{\nu+1}{i} \int_\varepsilon^t \frac{(t-s)^\nu}{\nu!}  J\bigl(s, P^{\nu+1-i}a^{(i)}\bigr) \dd s.
\end{equation}
Because of \eqref{ShortTimeLimit}, we may take the limit $\varepsilon \rightarrow 0$ to obtain
\begin{equation*}
\lim_{\varepsilon \rightarrow 0} J\bigl(\varepsilon, P^{j-i}a^{(i)}\bigr) = \int_\Gamma \frac{P^{j-i}a^{(i)}(0, x)}{\det \bigl(\nabla^2 \phi|_{N_x \Gamma}\bigr)^{1/2}} \dd x.
\end{equation*}
Therefore,
\begin{equation*}
  J(t, a) = \sum_{j=0}^\nu t^j \sum_{i=0}^j \frac{1}{(j-i)!i!}\int_\Gamma \frac{P^{j-i}a^{(i)}(0, x)}{\,\det \bigl(\nabla^2 \phi|_{N_x \Gamma}\bigr)^{1/2}} \dd x + R^\nu(0, t),
\end{equation*}
for any $\nu \in \N_0$, where the remainder term is of order $t^{\nu+1}$. This finishes the proof.
\end{proof}

    \bibliography{Literatur}

\end{document}